\numberwithin{equation}{section}
\newcommand{\orcid}[1]{\href{https://orcid.org/#1}{\textsc{orc}i\textsc{d}}}
\title[]{H\'enon maps with biholomorphic Kato surfaces}
\author{Fran\c cois Bacher}
\address{Université Bourgogne Europe, CNRS, IMB UMR 5584, F-21000 Dijon, France}
\email{francois.bacher@ube.fr}
\date{\today}
\newcommand{\bigslant}[2]{{\raisebox{.4em}{$#1$}\left/\raisebox{-.4em}{$#2$}\right.}}
\theoremstyle{plain}
\newtheorem{thm}{Theorem}[section]
\newtheorem{lem}[thm]{Lemma}
\newtheorem{prop}[thm]{Proposition}
\newtheorem*{thm*}{Theorem}
\newtheorem*{conj*}{Conjecture}
\theoremstyle{definition}
\newtheorem{defn}[thm]{Definition}
\newtheorem*{exmp*}{Example}
\theoremstyle{remark}
\DeclareMathOperator{\Aut}{Aut}
\newcommand{\eps}{\varepsilon}
\newcommand{\set}[1]{\mathbb{#1}}
\newcommand{\germ}[2]{\left(#1,#2\right)}
\newcommand{\der}[2]{\frac{\partial#1}{\partial#2}}
\newcommand{\norm}[1]{\left\Vert#1\right\Vert}
\newcommand{\intoo}[2]{\left(#1,#2\right)}
\newcommand{\intent}[2]{\left\llbracket#1,#2\right\rrbracket}
\newcommand{\proj}[1]{\set{P}^{#1}}
\newcommand{\setst}{~\vert~}
\newcommand{\wo}[2]{#1\backslash#2}
\newcommand{\wt}[1]{\widetilde{#1}}
\newcommand{\wh}[1]{\widehat{#1}}
\begin{document}

\theoremstyle{plain}

\begin{abstract} Let $F$ and~$G$ be generalized H\'enon maps. We show that the Kato surfaces associated to the germs of~$F$ and~$G$ near infinity are biholomorphic if and only if~$F$ and~$G$ are conjugate in $\Aut\left(\set{C}^2\right)$. This answers a question raised by Favre in the survey~\cite{Xen}. Moreover, we describe all the Kato surfaces associated to the germ of a generalized H\'enon map near infinity.

  We also show that two generalized H\'enon maps are conjugate near infinity, if and only if they are affinely conjugate.
  \end{abstract}

\maketitle

\section{Introduction}

This paper investigates the relationship between two different types of objects: on the one hand, the iconic family of dynamical systems known as generalized Hénon maps; on the other hand, a class of non-Kähler complex surfaces known as Kato surfaces. Our main result, in rough terms, establishes an injection from the moduli space associated with the former into that of the latter, along with a detailed description of its image.

In 1976, Hénon~\cite{Henon} introduced the family of maps
\[H_{a,c}\colon\set{R}^2\to\set{R}^2;\qquad (x,y)\mapsto \left(ay+x^2+c,ax\right),\]
for $a\in\set{R}^*$ and $c\in\set{R}$, as a simplified model for the Poincaré first return map of the Lorenz flow. He carried out numerical experiments for some parameters and exhibited that they seemed to have strange attractors, with a fractal structure. H\'enon's observation were proven to be true by Benedicks and Carleson~\cite{BeneCarl} in 1991, using the works of \mbox{Jakobson}~\cite{Jakob} on $1$-dimensional maps. Moreover, H\'enon maps provide a natural framework for studying homoclinic tangencies, a central mechanism for bifurcations. They also play a universal role in this context: in many situations, renormalization near the unfolding of a homoclinic tangency gives rise to H\'enon-like dynamics.

Here, we are interested in a generalization of the maps Hénon introduced in the complex framework. The study of complex H\'enon maps was actually developed parallelly, first with algebraic me\-thods. It was known since Jung~\cite{Jung} that the group of polynomial automorphisms of~$\set{C}^2$ has the structure of an amalgamated product of affine maps and elementary maps, i.e., maps which preserve the pencil of lines $\{x=\rm{constant}\}$. In 1989, Friedland and \mbox{Milnor}~\cite{FriMil} used Jung's Theorem to show that polynomial automorphisms of~$\set{C}^2$ are either conjugated to an affine or to an elementary map, or to a so-called generalized H\'enon map. Among these three types, the last one is the only dynamically non-trivial, that is, the maps of which have positive entropy. In this paper, we will call \emph{generalized H\'enon maps} automorphisms $F=F_N\circ\dots\circ F_1$ of $\set{C}^2$, for $N\in\set{N}^*$ and $F_i$, $i\in\{1,\dots,N\}$, with
\begin{equation}\label{defFiintro}F_i\colon\set{C}^2\to\set{C}^2,\qquad(z,w)\mapsto\left(P_i(z)-a_iw,z\right),\end{equation}
for~$P_i$ a polynomial of degree at least~$2$ and~$a_i\in\set{C}^*$. Maps of the form~\eqref{defFiintro}, will be called \emph{standard H\'enon maps}. Up to an affine conjugacy, it is easy to realize $H_{a,c}$ as the restriction to~$\set{R}^2$ of a standard H\'enon map in the latter sense.

In the early 90's, the fruitful analytic approach to $1$-dimensional dynamics was deve\-loped in higher dimension. Arguably, the easiest case in this direction is the study of automorphisms of~$\set{C}^2$. This method was carried out simultaneously by Hubbard--Oberste-Vorth~\cite{HOV}, Forn\ae{}ss--Sibony~\cite{ForSibHenon} and Bedford--Smillie, in a long series of articles, see for example~\cite{BedSmiI,BedSmiIII,BedSmiVI}. Part of this series~\cite{BedSmiIV} was written with Lyubich and has also shown to be fruitful in the study of real H\'enon maps. Since then, H\'enon maps have never ceased to be a central object in holomorphic dynamics, with active research, as can be seen in the survey of open questions~\cite{Xen}.

On the other hand, this paper deals with Kato surfaces. These surfaces play an important role in Kodaira's classification of compact minimal complex surfaces~\cite{Kodclass}. In Kodaira's~\cite{Kod66} presentation, they belong to the class $\mathrm{VII}_0$. These are the surfaces with Kodaira dimension $-\infty$ and first Betti number~$1$. In particular, these are non-K\"{a}hler surfaces. The class $\mathrm{VII}_0$ is the last mysterious gap in the Kodaira classification. The simplest examples in this class are primary Hopf surfaces, given as quotients of $\wo{\set{C}^2}{\{0\}}$ by the infinite cyclic group generated by the linear automorphism $(z_1,z_2)\mapsto(\alpha_1z_1,\alpha_2z_2)$, $\alpha_1,\alpha_2\in\set{D}^*$. They admit a \emph{global spherical shell}, that is, a holomorphic embedding of a neighbourhood of the sphere $\set{S}^3\subset\set{C}^2$ that does not disconnect the surface.

More generally, consider a tower of blow-ups $\Pi\colon\wh{B}\to\germ{\set{C}^2}{0}$ and a germ of biholomorphism $\sigma\colon(\set{C}^2,0)\to(\wh{B},p)$, for some $p\in\wh{B}$. For two well chosen balls $B\subset B'\subset\set{C}^2$ centered at~$0$, consider $\wo{\Pi^{-1}(B')}{\sigma(B)}$ and glue it along $\sigma\circ\Pi$ (see Section~\ref{secKato1} and Figure~\ref{figconstructionKato} for more details). The obtained surface admits a global spherical shell and belongs to the $\mathrm{VII}_0$ class. This construction was initiated by Kato~\cite{Kato}, later studied by Enoki~\cite{Enoki1,Enoki2} and Nakamura~\cite{Nak1,Nak2} and extensively by Dloussky with others~\cite{Dlou1,Dlou2,Dlou3,Dlou4} (see the memoir~\cite{memDlou}). Many of these works manage to translate interesting geometric pro\-perties into combinatorial properties of the successive blow-ups, making them relatively elementary to study. Among surfaces of class $\mathrm{VII}_0$, the surfaces obtained by this procedure are the only one known, and are conjecturally the only one existing. Moreover, it was shown by Dloussky~\cite{memDlou} that minimal surfaces admitting a global spherical shell can be constructed that way. So, one can reformulate the conjecture to be that every surface of class $\mathrm{VII}_0$ would admit a global spherical shell. Proving such a conjecture would close Kodaira's classification.

In the present paper, we are interested in a bridge between these two objects of H\'enon maps and Kato surfaces. Consider a generalized H\'enon map $F=F_N\circ\dots\circ F_1$, for $N\in\set{N}^*$ and $F_i$ of the form~\eqref{defFiintro}, $i\in\{1,\dots,N\}$. The maps~$F$ and $F^{-1}$ extend to birational maps $\proj{2}\dashrightarrow\proj{2}$ with respective indeterminacy points~$I^{\pm}$. In particular, they are regular in the sense of Sibony~\cite{SibPanSyn}, i.e., $I^+$ and $I^-$ are disjoint. In homogeneous coordinates $[t:z:w]$, the extension of~$F$ (resp.~$F^{-1}$) contracts the whole line at infinity $\{t=0\}$ onto the super-attractive fixed point~$I^-=[0:1:0]$ (resp. $I^+=[0:0:1]$). As for one-dimensional polynomial dynamics, it is useful to consider the dichotomy between points with escaping and points with bounded dynamics. Indeed, if we denote by
\[\Omega^{\pm}=\left\{p\in\set{C}^2\setst F^{\pm n}(p)\to_{n\to\infty}I^{\mp}\right\}=\left\{p\in\set{C}^2\setst F^{\pm n}(p)\text{ is not bounded}\right\},\]
by $J^{\pm}=\partial\Omega^{\pm}$ and $J=J^+\cap J^-$, then~$J$ is often a very good analog of the Julia set of a rational map of~$\proj{1}$. Recall that for a polynomial $f\colon\set{C}\to\set{C}$, the set $K_f$ of points with bounded orbits is the filled-in Julia set, and $J_f=\partial K_f$ is the Julia set of~$f$. This is where is concentrated all the chaotic behaviour of~$f$, and this is the support of the unique measure of maximal entropy. In their series, Bedford and Smillie showed analogous results for~$J$ in the case of H\'enon maps. If we concentrate on~$\Omega^+$, Hubbard and Oberste-Vorth~\cite{HOV} gave a full analytic description of this basin of attraction, and in particular showed that~$F$ acts properly and discontinuously on~$\Omega^+$. Therefore, one can define the quotient $\Omega^+/F$, being naturally a complex surface.

As Dloussky and Oeljeklaus~\cite{HenonDlou} and Favre~\cite{Fav} noticed, this quotient can be compactified by the above procedure into a Kato surface. One obtains an intermediate Kato surface, admitting a unique holomorphic foliation~\cite{DlOefol}, which is given by the level sets of a Green function. 
In fact, this foliation comes from one on~$\Omega^+$, which plays a crucial role in the works of Hubbard--Oberste-Vorth~\cite{HOV} and Bedford--Smillie~\cite{BedSmiV,BedSmiVI}.

As a more general bridge between dynamics and Kato surfaces, the classification of such surfaces is deeply linked to the one of \emph{rigid germs} in the sense of Favre~\cite{Fav}. These are the holomorphic germs $f\colon\germ{\set{C}^2}{0}\to\germ{\set{C}^2}{0}$ such that if we denote by $\mathcal{C}_f$ the critical set of~$f$, the set $\mathcal{C}^{\infty}=\cup_{n\in\set{N}}f^{-n}(\mathcal{C}_f)$ forms a normal crossing divisor. In~\cite{Fav} and in his thesis~\cite{theseFav}, Favre gave a complete classification of such germs and determined which ones give rise to Kato surfaces. Also, he gave simple normal forms which proved to be very useful in the recent study of Kato surfaces (see for example~\cite{OelToma}). In the recent survey~\cite{Xen}, he asked the following question. Given a generalized H\'enon map~$F$, can one describe all the generalized H\'enon maps~$G$ such that $S_F$ is biholomorphic to~$S_G$? It is a straightforward consequence of~\cite{Fav} and~\cite{HOV} that if two generalized H\'enon maps are conjugate near infinity (that is, near~$I^-$), then they have biholomorphic escaping set. It is also well known~\cite{memDlou} that two conjugate germs induce the same Kato surface. Therefore, the works of Bonnot--Radu--Tanase~\cite{BRT} and Pal~\cite{Pal} can be seen as partial results towards a solution to this question. They investigated when two standard H\'enon maps have biholomorphic escaping sets. However, their condition is obviously weaker than the one to be found for Kato surfaces. Indeed, on the one hand they find H\'enon maps having different Jacobian with biholomorphic escaping sets, and on the other hand Favre~\cite{Fav} showed that the germ at infinity of a H\'enon map determines the Jacobian of the map. Here, we prove the following.

\begin{thm} \label{critKato} Let $F=F_N\circ\dots\circ F_1$ and~$G$ be generalized H\'enon maps, where the $F_i$, $i\in\{1,\dots,N\}$ are standard H\'enon maps. Then, $S_F$ and $S_G$ are biholomorphic if and only if~$G$ is affinely conjugate to one of the $F_k\circ\dots\circ F_1\circ F_N\circ\dots\circ F_{k+1}$, for $k\in\{1,\dots,N\}$.
\end{thm}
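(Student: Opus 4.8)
The plan is to establish both implications of the equivalence, and I would start with the (softer) ``if'' direction. Affinely conjugate generalized Hénon maps have conjugate germs near infinity, hence induce the same Kato surface by~\cite{memDlou}; so it suffices to prove $S_F\cong S_{\tilde F_k}$ for $\tilde F_k:=F_k\circ\dots\circ F_1\circ F_N\circ\dots\circ F_{k+1}$. Writing $g:=F_N\circ\dots\circ F_{k+1}$ (with $g:=\id$ when $k=N$) one computes $g^{-1}\circ F\circ g=\tilde F_k$; since $g$ is a polynomial automorphism of $\set C^2$, hence proper, it maps $\Omega^+_F$ biholomorphically onto $\Omega^+_{\tilde F_k}$ and intertwines the dynamics, so $\Omega^+_F/F\cong\Omega^+_{\tilde F_k}/\tilde F_k$. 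Since~\cite{HenonDlou,Fav} realize $S_F$ (resp.\ $S_{\tilde F_k}$) as the canonical Kato compactification of $\Omega^+_F/F$ (resp.\ $\Omega^+_{\tilde F_k}/\tilde F_k$), we obtain $S_F\cong S_{\tilde F_k}$.

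For the converse, suppose $S_F\cong S_G$. A biholomorphism $\psi\colon S_F\to S_G$ necessarily sends the maximal divisor $D_F$ (intrinsically characterized, for instance as the union of all compact curves of $S_F$) onto $D_G$, hence restricts to a biholomorphism $\Omega^+_F/F\cong\Omega^+_G/G$ that, being a restriction of $\psi$, extends holomorphically across $D_F$. Lifting through the natural $\set Z$-coverings $\Omega^+_F\to\Omega^+_F/F$ and $\Omega^+_G\to\Omega^+_G/G$, I would obtain a biholomorphism $\Phi\colon\Omega^+_F\to\Omega^+_G$ with $\Phi\circ F=G\circ\Phi$; the positive generator is matched to the positive generator because $\Phi$ intertwines the Green functions of $F$ and of $G$, which are multiplied by $d=\prod_i\deg P_i$ under the respective positive iterate and so single out that generator. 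Moreover $\Phi$ preserves the holomorphic foliations of the escaping sets (level sets of the Böttcher functions), the ones that descend to the canonical foliations of $S_F$, $S_G$ by~\cite{DlOefol}.

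The heart of the proof is then to upgrade $\Phi$ into a comparison of the germs of $F$ and $G$ near $I^-$. Here I would use the description of $\Omega^+$ due to Hubbard--Oberste-Vorth~\cite{HOV} --- $\Omega^+$ is the increasing union of the $F^{-n}$ of an explicit region around $I^-$ carrying a Böttcher-type coordinate in which $F$ becomes a monomial map --- together with the foliation-preserving property of $\Phi$ and the fact that it already extends across $D_F\mapsto D_G$, to force $\Phi$ to prolong holomorphically past the boundary of $\Omega^+$ near $I^-$, compatibly with the towers of blow-ups resolving the germs of $F$ and of $G$ at $I^-$. Since that resolving configuration is a cycle and $\Phi$ need not respect the marking ``which blow-up comes first,'' the outcome is not that $F$ and $G$ are conjugate near infinity, but that $G$ is conjugate near infinity to one of the cyclic permutations $\tilde F_k$; extracting exactly this $N$-fold ambiguity --- and nothing coarser --- is where Favre's classification~\cite{Fav,theseFav} of the rigid germs of generalized Hénon maps enters, since it records the length $N$, the degrees, and the affine data of the factors up to cyclic permutation. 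Finally, once $G$ is known to be conjugate near infinity to $\tilde F_k$, the other main theorem of this paper --- generalized Hénon maps conjugate near infinity are affinely conjugate --- shows that $G$ is affinely conjugate to $\tilde F_k$.

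I expect this third step to be by far the main obstacle. The reductions of the first two paragraphs are essentially formal (covering-space theory, properness, the intrinsic nature of the maximal divisor) or immediate citations; the delicate point is to make the Hubbard--Oberste-Vorth geometry of the escaping set interact with Favre's combinatorics of rigid germs, so as to prolong the conjugacy $\Phi$ all the way to a germ at infinity and to pin down, precisely and without slack, the cyclic-permutation ambiguity that appears in the statement.
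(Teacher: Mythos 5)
Your overall architecture matches the paper's: both routes reduce the theorem to (i) $S_F\cong S_G$ if and only if $G$ is conjugate \emph{near infinity} to one of the cyclic permutations $\tilde F_k=F_k\circ\dots\circ F_1\circ F_N\circ\dots\circ F_{k+1}$ (the paper's Proposition~\ref{critKatoloc}), and (ii) conjugacy near infinity is equivalent to affine conjugacy (Theorem~\ref{Henonlocaff}, which you correctly treat as a separate input). Your ``if'' direction, via the global conjugacy $g^{-1}\circ F\circ g=\tilde F_k$ and the quotients $\Omega^+/F$, is a legitimate variant of the paper's purely germ-theoretic Proposition~\ref{Sfg=Sgf} ($S_{F\circ G}\cong S_{G\circ F}$ for Dloussky germs), though it quietly relies on the biholomorphism of the open quotients extending across the added divisor, which the germ-level argument sidesteps.

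The gap is in what you call the heart of the proof. First, the mechanism you propose --- prolonging the lifted conjugacy $\Phi\colon\Omega^+_F\to\Omega^+_G$ ``past the boundary of $\Omega^+$ near $I^-$'' --- is not workable as stated: $I^-$ is not a boundary point of $\Omega^+$ across which $\Phi$ could be continued; the germ of $F$ at $I^-$ is encoded in the gluing of the Kato surface, i.e.\ in the cycle of rational curves $D_1,\dots,D_n$, and the correct tool is Dloussky's structure theorem (Proposition~\ref{biholoKatocourbes}): a biholomorphism $S_F\to S_G$ cyclically permutes the $n=b_2$ compact curves and conjugates the associated intermediate germs, one per blow-up. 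Second, and more substantively, that theorem only yields that the germ of $G$ at infinity is conjugate to \emph{one of the $n=\sum_k(2d_k-1)$ intermediate germs} of $F$'s blow-up tower, of which only $N$ are germs of the compositions $\tilde F_k$; you must exclude the remaining $n-N$. Your write-up conflates the $n$-fold cycle of exceptional curves with the $N$-fold cycle of H\'enon factors, and attributing the discrimination to ``Favre's classification of rigid germs'' supplies no argument. The paper does this exclusion concretely: it computes the self-intersection profile of the curve cycle (Lemma~\ref{selfinterprofil}) and observes that the germ of $G$ is attached to a curve of self-intersection $-3$ cyclically preceded by a $-2$, and that exactly the $N$ curves of that type in $S_F$ carry the germs of the $\tilde F_k$. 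Without some such invariant of the configuration, the converse direction is not proved.
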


This condition is equivalent to~$F$ and~$G$ being conjugate in $\Aut\left(\set{C}^2\right)$~\cite{FriMil}. In terms of moduli space, this means that we have an injection from the moduli space of H\'enon maps onto the moduli space of Kato surfaces. As we will show, we also have some kind of surjection property on the moduli space of Kato surfaces of a certain type. More precisely, one obtains a surjection if the surface does not admit a global vector field, and we describe the image if it does. In this case, the germs at infinity of H\'enon maps give new normal forms for germs inducing Kato surfaces.

Let us describe briefly the method of our proof. First, we show that $S_F\simeq S_G$ if and only if $G$ is conjugate near infinity to one of the $F_k\circ\dots\circ F_1\circ F_N\circ\dots\circ F_{k+1}$. This is actually a consequence of some well known properties of Kato surfaces and a basic analysis of the Kato surface induced by a generalized H\'enon map. Indeed, Dloussky~\cite{memDlou} showed that a Kato surface $S$ contains~$n$ compact curves, coming with $n$ contracting germs $f_1,\dots,f_n$, for $n=b_2(S)$. These curves are somehow cyclically ordered in the sense that one curve is obtained as the exceptional divisor of the blowing-up in one point of the previous one. The cyclic behaviour is then a consequence of taking the quotient. Dloussky~\cite{memDlou} proved that two Kato surfaces inducing respectively germs $f_1,\dots,f_n$, $g_1,\dots,g_n$ are biholomorphic if and only if one of the $f_i$ is conjugate to one of the $g_j$. In the case of a composition $F_N\circ\dots\circ F_1$, where each map $F_j$ induces separately a Kato surface, we show that each $F_k\circ\dots\circ F_1\circ F_N\circ\dots\circ F_{k+1}$ is conjugate to one of the $f_j$.

So, the first part of the proof reduces to showing that these are the only ones which can be conjugate to a generalized H\'enon map near infinity. We prove the latter by computing the configuration of self-intersection of the~$n$ curves mentioned above, in which there are exactly~$N$ curves which have particular self-intersection and correspond to the germs of the generalized H\'enon maps $F_k\circ\dots\circ F_1\circ F_N\circ\dots\circ F_{k+1}$.

Next, we show that two generalized H\'enon maps are conjugate near infinity if and only if they are affinely conjugate. This is the most involving part of our work. We do so by computing the normal form of the germ in the sense of Favre~\cite{Fav}. Such a computation was done up to degree~$6$ for standard H\'enon maps by Bonnot--Radu--Tanase~\cite{BRT}. Favre gave an explicit procedure in three steps to conjugate a rigid germ to a normal form \emph{via} an (essentially) tangent to the identity diffeomorphism. We follow this procedure and keep track of enough (exactly $2d-1$, where~$d$ is the degree of~$F$) of the first terms of the intermediate maps. More precisely, the first step of this procedure consists of taking B\"{o}ttcher coordinates, and we show that the next two steps more or less consist of cutting all the terms of degree more than~$2d-1$.

Having computed this normal form, we show that this procedure is injective if the right initial choices were made, namely taking monic and centered polynomials. That is, if two generalized H\'enon maps induce the same normal form, then they are equal (not only conjugate). In some sense, we show that these $2d-1$ terms contain all the polynomials and Jacobians of the standard H\'enon maps in such a way that they do not ``interfere''. That is, the first terms are only determined by the polynomial~$P_N$ (see~\eqref{defFiintro}) and uniquely determines it. The other polynomials only intervene after inside the full development. This property is also satisfied inductively and we recover the whole generalized H\'enon map from its germ. Next, we use the work of Favre~\cite{Fav}, who gives necessary and sufficient conditions for two normal forms to be conjugate. We show in our case, that conjugate normal forms are given by affinely conjugate generalized H\'enon maps.

Finally, we show that the map taking a H\'enon map to a Kato surface of certain self-intersection profile is essentially surjective (except when the Jacobian equals the degree, see Theorem~\ref{surjHenonKato}). Combined with the preceeding results, this shows that generalized H\'enon maps with monic and centered polynomials give normal forms for germs inducing this type of Kato surfaces. To do so, we prove that the natural coefficients of the H\'enon map $(\alpha_i)_{i\in\{1,\dots,K\}}$, and those of the polynomial normal form $(\beta_i)_{i\in\{1,\dots,K\}}$, are of the same amount~$K$, and if put in the right order, depend triangularly and algebraically from one to another. That is, 
\[\beta_i=u_i\alpha_i+P_i(\alpha_1,\dots,\alpha_{i-1}),\]
for $u_i\in\set{C}^*$ and $P_i$ universal polynomials. Such a map $(\alpha_i)_{i\in\{1,\dots,K\}}\mapsto(\beta_i)_{i\in\{1,\dots,K\}}$ is easily invertible. Here, we use extensively the description of intermediate Kato surfaces of a given self-intersection profile given by Oeljeklaus and Toma~\cite{OelToma}.

The article is organized as follows. In Section~\ref{secKato1}, we recall some basic constructions of Kato surfaces. We show that if~$f$ and~$g$ induce Kato surfaces, then $S_{f\circ g}\simeq S_{g\circ f}$. In Section~\ref{secKato2}, we compute the self-intersection configuration of Kato surfaces induced by generalized H\'enon maps. We prove our first criterion involving germs at infinity. Next, we compute the normal form of a germ at infinity of a generalized H\'enon map in Section~\ref{secnormalform} and prove Theorem~\ref{critKato}. Finally, we conclude with showing the surjectivity in Section~\ref{secsurj}.

\subsection*{Notations} Throughout the article, we denote by $\intent{p}{q}$ the set of integers $p\leq k\leq q$.

\subsection*{Acknowledgments}  This work has been supported by the EIPHI Graduate school (contract ``ANR-17-EURE-0002'') and by the Région ``Bourgogne Franche-Comté''. The author would like to thank the referee for his interesting remarks and questions.

\section{Kato surfaces}\label{secKato1}

In this section, we recall some basic constructions of Kato surfaces. We refer the reader to Dloussky's memoir~\cite{memDlou} or to Favre's thesis~\cite{theseFav} for more details and proofs.

\begin{defn} A \emph{tower of blow-ups} is a map $\Pi=\Pi_1\circ\dots\circ\Pi_n$, where $\Pi_1\colon B_1\to\germ{\set{C}^2}{0}$ is a blow-up of the origin $O_0=0\in\set{C}^2$, and for each $i\in\intent{2}{n}$, $\Pi_i\colon B_i\to B_{i-1}$ is the blow-up of some $O_{i-1}\in\Pi_{i-1}^{-1}(O_{i-2})$. We usually denote $\wh{B}=B_n$ and $B_0=\set{C}^2$.

  A contracting germ $F\colon\germ{\set{C}^2}{0}\to\germ{\set{C}^2}{0}$ is called a \emph{Dloussky germ} if it can be written as $F=\Pi\circ\sigma$, where $\Pi=\Pi_1\circ\dots\circ\Pi_n\colon\wh{B}\to\germ{\set{C}^2}{0}$ is a tower of blow-ups and with the notations above, $\sigma\colon\germ{\set{C}^2}{0}\to\germ{\wh{B}}{O_n}$ is a germ of biholomorphism, for some point $O_n\in\Pi_n^{-1}(O_{n-1})$.
\end{defn}

Let $F=\Pi\circ\sigma$ be a Dloussky germ, defined on a ball $B\subset\set{C}^2$ of radius $2r>0$. Let $\eps\in\intoo{0}{r}$ be sufficiently small and denote by $\Sigma_{\eps}=\{z\in\set{C}^2\setst\norm{z}\in\intoo{r-\eps}{r+\eps}\}$. The map $\sigma\circ\Pi\colon\wh{B}\to\wh{B}$ sends biholomorphically $\Pi^{-1}\left(\Sigma_{\eps}\right)$ onto $\sigma\left(\Sigma_{\eps}\right)$. Denote by $B_{\eps}$ the ball of radius $r+\eps$ and $B_{-\eps}$ the ball of radius $r-\eps$. One can define the \emph{Kato surface associated to~$F$} to be
\[S_F=\bigslant{\left(\Pi^{-1}\left(B_{\eps}\right)\setminus\sigma\left(B_{-\eps}\right)\right)}{\sim},\]
where~$\sim$ is the gluing given by $\sigma\circ\Pi\colon\Pi^{-1}\left(\Sigma_{\eps}\right)\to\sigma\left(\Sigma_{\eps}\right)$, that is $z\sim\sigma\circ\Pi(z)$, for $z\in\Pi^{-1}\left(\Sigma_{\eps}\right)$ (see Figure~\ref{figconstructionKato}). One can show that this construction only depends on the germ~$F$ and not on the other choices made~\cite[Proposition~3.16]{memDlou}. We have the following.

\begin{figure}[htb]
  \centering
  \def \globalscale {9.600000}
\begin{tikzpicture}[y=0.80pt, x=0.80pt, yscale=-\globalscale, xscale=\globalscale, inner sep=0pt, outer sep=0pt,line width=0.5pt,draw=black]
\draw[fill=black,fill opacity=0.2]  (18.1240,36.1156) node[above,yshift=3.55cm,opacity=1] {$\Pi^{-1}\left(\Sigma_{\eps}\right)$}circle
  (0.4208cm);

\draw (47.3549,37.5232) circle
(0.1266cm);

\draw[fill=black] (47.3549,37.5232) node[below,yshift=-0.1cm] {$0$}circle (0.004cm);

\draw[->,>=stealth]
  (51.9626,38.6933) .. controls (52.1705,38.9220) and (52.4451,39.0894) ..
  (52.7437,39.1695) .. controls (53.0423,39.2495) and (53.3638,39.2418) ..
  (53.6583,39.1476) .. controls (54.1183,39.0006) and (54.5040,38.6371) ..
  (54.6781,38.1867) node[above right,xshift=0.1cm] {$F=\Pi\circ\sigma$} .. controls (54.8523,37.7362) and (54.8113,37.2079) ..
  (54.5698,36.7896) .. controls (54.3283,36.3713) and (53.8913,36.0717) ..
  (53.4141,35.9973) .. controls (52.9369,35.9228) and (52.4293,36.0751) ..
  (52.0719,36.4000);

\draw[fill=white] (18.1240,36.1156) circle
  (0.3685cm);

\draw
  (10.6181,28.5750) node[above] {$D_1$} -- (10.6181,41.8042);

\draw
  (9.1273,30.2077) node[left] {$D_2$} -- (22.7380,30.2077);

\draw
  (21.4312,28.5750) node[above] {$D_3$} -- (21.4312,41.8042);

\draw[fill=black,fill opacity=0.2] (21.4312,36.2479) node[below,yshift=-0.285cm,xshift=0.6cm,opacity=1] {$\sigma\left(\Sigma_{\eps}\right)$} circle
  (0.0735cm);

\draw[fill=black,fill opacity=0.7] (21.4312,36.2479) circle
(0.0438cm);

\draw[fill=black] (21.4312,36.2479) circle (0.004cm);

\draw[->,>=stealth]
  (10.6067,46.7546) .. controls (11.2570,44.7416) and (12.3050,42.8579) ..
  (13.6726,41.2438) node[below right] {$\sigma\circ\Pi$} .. controls (15.1114,39.5457) and (16.9031,38.1480) ..
  (18.9003,37.1657);

\draw[->,>=stealth]
  (43.9460,34.5747) .. controls (42.0189,32.3103) and (39.3217,30.7135) ..
  (36.4096,30.1129) node[above left,yshift=0.15cm,xshift=-0.2cm] {$\sigma$} .. controls (34.0762,29.6316) and (31.6152,29.7842) ..
  (29.3591,30.5501) .. controls (27.1031,31.3160) and (25.0578,32.6931) ..
  (23.4996,34.4955);

\draw[->,>=stealth]
  (32.7010,39.4447) .. controls (34.1360,40.2012) and (35.7515,40.6125) ..
  (37.3736,40.6344) node[below,yshift=-0.1cm] {$\Pi$} .. controls (39.3465,40.6611) and (41.3239,40.1068) ..
  (42.9955,39.0586);

\end{tikzpicture}
  \caption{\label{figconstructionKato}Construction of Kato surfaces}
\end{figure}
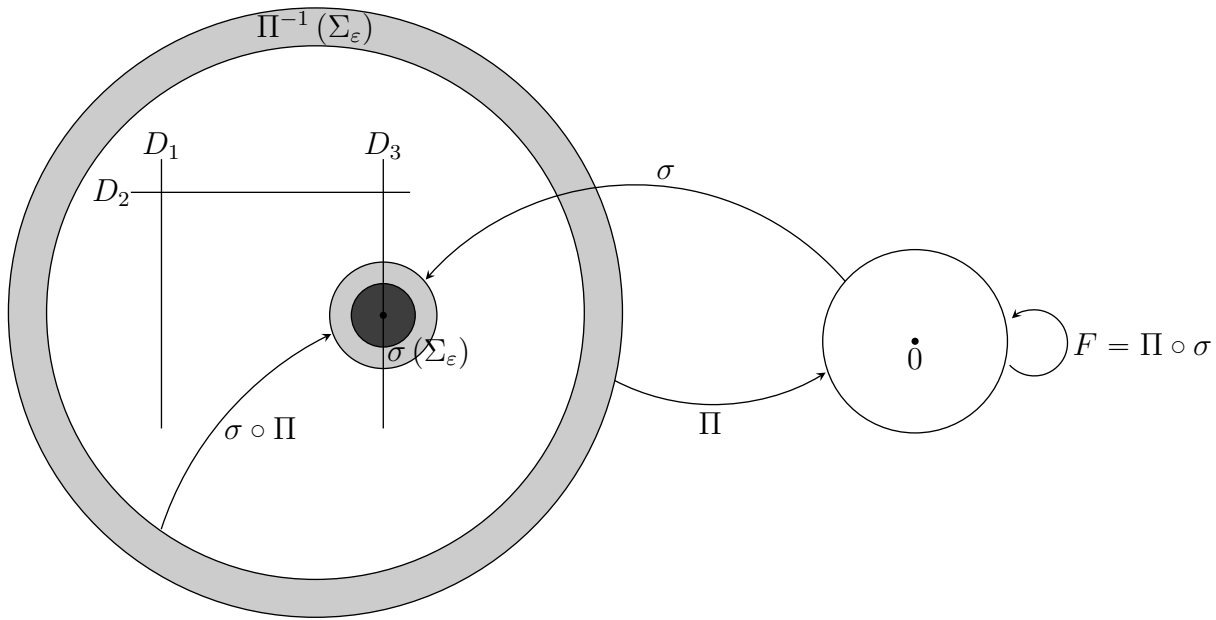

\begin{prop}[Dloussky~{\cite[Lemma~1.10]{memDlou}}] With the notations above, $b_1(S_F)=1$ and $b_2(S_F)=n$ is the number of blow-ups. In particular, $S_F$ is non-K\"{a}hler.
\end{prop}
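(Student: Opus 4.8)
The plan is to set aside the complex structure and compute the Betti numbers of the closed oriented real $4$-manifold underlying $S_F$ by elementary algebraic topology: $b_1$ from the fundamental group, and $b_2$ from the Euler characteristic together with Poincar\'e duality. This is, in essence, Dloussky's argument.

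First I would fix the topological model of $S_F$. As all the centres of the blow-ups lie over $0$, the map $\Pi$ is proper and restricts to an isomorphism away from the exceptional divisor $\mathcal E=\Pi^{-1}(0)$, so
\[W:=\overline{\Pi^{-1}\left(B_\eps\right)}\setminus\sigma\left(B_{-\eps}\right)\]
is a compact $4$-manifold with exactly two boundary components, $\Pi^{-1}\left(\partial B_\eps\right)\cong\partial B_\eps=S^3$ and $\sigma\left(\partial B_{-\eps}\right)\cong\partial B_{-\eps}=S^3$; moreover the relation $z\sim\sigma\circ\Pi(z)$ identifies a collar neighbourhood of the first component with one of the second. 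Hence, up to homeomorphism, $S_F$ is obtained from $W$ by glueing its two boundary $3$-spheres together along a diffeomorphism; in particular $S_F$ is homotopy equivalent to $W\cup_\varphi\left(S^3\times\intcc{0}{1}\right)$, the cylinder being attached to the two boundary components of $W$. Since $S_F$ is a compact connected complex surface, it is a closed connected oriented real $4$-manifold, so Poincar\'e duality gives $b_4(S_F)=b_0(S_F)=1$ and $b_3(S_F)=b_1(S_F)$.

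Next I would compute $\pi_1(S_F)$. An iterated blow-up of a ball is simply connected: each blow-up removes a small open ball and glues back the disc bundle of $\mathcal{O}(-1)$ over $\proj{1}$ along an $S^3$, an operation which preserves simple-connectedness by van Kampen's theorem; and removing an open ball from the interior of a simply connected $4$-manifold does not change the fundamental group. Thus $W$ is simply connected. Applying van Kampen to $S_F\simeq W\cup_\varphi\left(S^3\times\intcc{0}{1}\right)$ --- whose two pieces are simply connected but whose overlap $S^3\sqcup S^3$ has \emph{two} components --- yields $\pi_1(S_F)\cong\set{Z}$, so that $b_1(S_F)=1$.

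Then I would compute $\chi(S_F)$. Blowing up a point on a $4$-manifold raises the Euler characteristic by $1$ (a ball, $\chi=1$, is replaced by a disc bundle over $\proj{1}$, $\chi=2$), so starting from $\chi\left(\overline{B_\eps}\right)=1$ and performing the $n$ blow-ups one gets $\chi\left(\overline{\Pi^{-1}\left(B_\eps\right)}\right)=n+1$; removing the open ball $\sigma\left(B_{-\eps}\right)$, whose boundary sphere has vanishing Euler characteristic, gives $\chi(W)=n$; and glueing in the cylinder $S^3\times\intcc{0}{1}$, which together with its boundary $S^3\sqcup S^3$ has vanishing Euler characteristic, does not change it, so $\chi(S_F)=n$. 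Combining this with the two Poincar\'e duality identities and with $b_1(S_F)=1$,
\[n=\chi(S_F)=b_0(S_F)-b_1(S_F)+b_2(S_F)-b_3(S_F)+b_4(S_F)=2-2b_1(S_F)+b_2(S_F),\]
whence $b_2(S_F)=n$. Finally, $b_1(S_F)=1$ is odd, while a compact K\"{a}hler surface has even first Betti number (by Hodge theory $b_1=2h^{1,0}$); therefore $S_F$ is not K\"{a}hler.

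I expect the only genuinely delicate point to be the first step: matching the glueing construction --- performed a priori on the a priori non-compact $\Pi^{-1}\left(B_\eps\right)\setminus\sigma\left(B_{-\eps}\right)$ and supported on an annular shell lying in its interior --- with the compact model ``$W$ with its two boundary $3$-spheres identified'', and checking along the way that the quotient is a compact Hausdorff manifold. Everything after that is soft algebraic topology, and, notably, it uses no dynamical input whatsoever.
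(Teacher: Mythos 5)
Your argument is correct, and it is essentially the standard one (the paper itself gives no proof here — the proposition is imported from Dloussky's memoir, whose argument is the same soft topology: $W$ simply connected, the two boundary $3$-spheres glued giving $\pi_1\cong\set{Z}$ by van Kampen/Mayer--Vietoris, then $\chi=n$ plus Poincar\'e duality to get $b_2=n$, and $b_1$ odd to exclude the K\"ahler case). The one point you flag as delicate — that the quotient is a compact Hausdorff manifold obtained by identifying the two boundary spheres of $W$, which requires $\sigma\left(\adh{B_{-\eps}}\right)$ to sit in the interior of $\Pi^{-1}\left(B_{-\eps}\right)$ and hence uses that the germ is contracting and that $r,\eps$ are well chosen — belongs to the construction itself (the paper's reference to \cite[Proposition~3.16]{memDlou}) rather than to this proposition, so your proof is complete as stated.
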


Moreover, the Kato surface $S_F$ comes with~$n$ compact curves $D_1,\dots,D_n$, which are the respective projections of the exceptional divisor of the respective blow-ups $\Pi_1,\dots,\Pi_n$. Each of this curve comes with a contracting germ
\begin{equation}\label{defFk}F_k=\Pi_{k+1}\circ\dots\circ\Pi_n\circ\sigma\circ\Pi_1\circ\dots\circ\Pi_k\colon\germ{B_k}{O_k}\to\germ{B_k}{O_k},\qquad k\in\intent{1}{n}.\end{equation}
By definition, $F_n=\sigma\circ F\circ\sigma^{-1}$. Moreover, we have the following description of Dloussky germs inducing the same Kato surface.

\begin{thm}[Dloussky~{\cite[Subsection~3.11]{memDlou}}]\label{thmDlou} Let~$F$ be as above and~$G$ be a Dloussky germ. Then, $S_G$ is biholomorphic to $S_F$, if and only if~$G$ is conjugate to one of the~$F_k$, $k\in\intent{1}{n}$.
\end{thm}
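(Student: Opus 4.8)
The plan is to establish the two implications separately. The ``if'' direction I would obtain from two invariance properties of the construction — conjugacy invariance and cyclic invariance — while the ``only if'' direction requires reconstructing the multiset $\{F_k\}$ intrinsically from the biholomorphism type of $S_F$, for which I would use the canonical infinite cyclic covering of a class $\mathrm{VII}_0$ surface.

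For conjugacy invariance, suppose $G=\psi^{-1}\circ F_k\circ\psi$ for a germ of biholomorphism $\psi$ at the relevant point. Writing $F_k=\Pi^{(k)}\circ\sigma^{(k)}$, one has $G=(\psi^{-1}\circ\Pi^{(k)})\circ(\sigma^{(k)}\circ\psi)$, where $\psi^{-1}\circ\Pi^{(k)}$ is again a tower of blow-ups (changing coordinates at the base does not change which points are blown up) and $\sigma^{(k)}\circ\psi$ is a germ of biholomorphism; conjugating the whole gluing picture by $\psi$ then produces a biholomorphism $S_G\simeq S_{F_k}$ — this is the well-known invariance recalled above. It then remains, for the ``if'' direction, to show $S_{F_k}\simeq S_F$ for every $k\in\intent{1}{n}$. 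For this I would unroll the gluing defining $S_F$: glue a $\set{Z}$-indexed chain of copies of $Y:=\Pi^{-1}(B_\eps)\setminus\sigma(B_{-\eps})$, the copy indexed by $j$ being attached to the one indexed by $j+1$ along $\sigma\circ\Pi$. The resulting surface $\wt S$ carries a free, properly discontinuous $\set{Z}$-action by shift with $S_F\simeq\wt S/\set{Z}$, and it is the infinite cyclic covering of $S_F$; the curves $D_1,\dots,D_n$ lift to a bi-infinite chain of rational curves $(\wt D_i)_{i\in\set{Z}}$ on which the shift acts by translation by $n$, and, by~\eqref{defFk}, $F_k$ is precisely the first-return germ read off from one period $\wt D_{k+1},\dots,\wt D_{k+n}$. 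In particular each $F_k$ is itself a Dloussky germ whose own infinite cyclic cover is $\wt S$ with a shifted $\set{Z}$-action, so $S_{F_k}\simeq\wt S/\set{Z}\simeq S_F$.

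For the ``only if'' direction, I would start from a biholomorphism $\Phi\colon S_G\to S_F$. Being a Dloussky germ, $G$ carries its own data $F_1^{(G)},\dots,F_{n_G}^{(G)}$ with $n_G=b_2(S_G)=b_2(S_F)=n$, and by definition $G$ is conjugate to $F_{n_G}^{(G)}$; so it suffices to show $F_{n_G}^{(G)}$ is conjugate to some $F_k$. On a Kato surface every curve is rational and there are only finitely many; their union (the maximal divisor) is a cycle of $n$ rational curves, and $\Phi$ must carry the maximal divisor of $S_G$ onto that of $S_F$. Lifting, $\Phi$ induces a biholomorphism $\wt\Phi\colon\wt{S_G}\to\wt{S_F}$ of infinite cyclic covers that intertwines the two $\set{Z}$-actions — the shift direction being pinned down by the fact that the germs involved are contracting — and therefore carries the bi-infinite curve chain of $\wt{S_G}$ onto that of $\wt{S_F}$ after a translation by some $m\in\set{Z}$.

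The crux, and the step I expect to be the main obstacle, is the intrinsic reconstruction: the germ attached to a period of the chain must depend only on the complex structure of a neighbourhood of that period inside the infinite cyclic cover, not on any chosen presentation of the Kato surface. Concretely, from the configuration of $n$ consecutive curves one recovers the tower of blow-ups combinatorially (the curves are the successive exceptional divisors, each being the blow-up of the unique point where the next curve of the period attaches), the complex structure of the neighbourhood forces each blown-up point, and the gluing forces the germ $\sigma^{(k)}$; the only residual freedom is the choice of which of the $n$ curves of a period is declared ``last'', and that choice produces exactly $F_1,\dots,F_n$ up to conjugacy. Granting this, $\wt\Phi$ identifies the period germ of $\wt{S_G}$ with that of $\wt{S_F}$ up to a cyclic shift of the base curve by $m$, so $F_{n_G}^{(G)}$ is conjugate to $F_k$ for $k\equiv n+m\pmod{n}$, whence $G$ is conjugate to one of the $F_k$. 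This reconstruction statement — in essence, that a Kato surface admits no two genuinely different presentations — is the heart of the theorem; in Dloussky's memoir it is carried out through a careful combinatorial study of the curve configurations and of the order in which they must be contracted.
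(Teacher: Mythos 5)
First, a remark on what you are being compared against: the paper does not prove Theorem~\ref{thmDlou} at all --- it is imported as a black box from Dloussky's memoir \cite[Subsection~3.11]{memDlou} --- so your proposal can only be judged on its own merits. Your ``if'' direction is sound in outline. Conjugacy invariance is the standard fact the paper itself invokes, and unrolling the gluing into a $\set{Z}$-indexed chain so that each $F_k$ appears as the first-return germ of a shifted fundamental domain is exactly the mechanism behind the paper's Proposition~\ref{Sfg=Sgf} (where the shift past one factor is realized by lifting $\sigma_1$ through the second tower of blow-ups). You would still need that small lifting argument to justify your parenthetical claim that each $F_k$ of~\eqref{defFk} is a Dloussky germ: as written, $\sigma\circ\Pi_1\circ\dots\circ\Pi_k$ is \emph{not} a germ of biholomorphism at $O_k$ when $O_k$ lies on an exceptional divisor, so the factorization into (tower)$\,\circ\,$(biholomorphism) has to be re-derived by commuting blow-ups past $\sigma$, not read off.

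The ``only if'' direction contains a genuine gap, and you have located it yourself: the ``intrinsic reconstruction'' step is stated, motivated, and then assumed (``Granting this\dots''). The assertion that the tower of blow-ups, its successive centers, and the gluing germ are determined by the complex structure of the infinite cyclic cover up to the cyclic ambiguity is not a verification one can defer --- it \emph{is} Dloussky's theorem; everything preceding it in your argument (lifting $\Phi$ to the cyclic covers, matching the $\set{Z}$-actions and the curve chains) is comparatively soft. Two further inaccuracies in the sketch: for intermediate Kato surfaces, which is the relevant case here (cf.\ the profile in Lemma~\ref{selfinterprofil}), the configuration of the $D_i$ is a cycle of rational curves \emph{with trees attached}, not a bare cycle of $n$ curves, so the combinatorial recovery of the blow-up order must first isolate the cycle inside a larger dual graph; and one must also know that $\Phi$ matches the distinguished $n$-tuples of curves (e.g.\ that $S_F$ carries no rational curves besides the $D_i$), which is again part of Dloussky's analysis rather than a formality. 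In short: a correct road map, consistent with how the result is actually proved, but the decisive step is missing.
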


In particular, we have the following. The first part of Proposition~2.4 is due to Favre in his thesis~\cite[Lemma~1.2.11]{theseFav}, but we reprove it to introduce notations.

\begin{prop}\label{Sfg=Sgf} Let $F,G$ be two Dloussky germs. Then, $G\circ F$ and $F\circ G$ are Dloussky germs and $S_{F\circ G}$ is biholomorphic to $S_{G\circ F}$.
\end{prop}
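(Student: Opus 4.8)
The plan is to exhibit both $G\circ F$ and $F\circ G$ as Dloussky germs by writing $F=\Pi\circ\sigma$ and $G=\Pi'\circ\sigma'$ with towers of blow-ups $\Pi$, $\Pi'$, and then to recognize the curves $D_1,\dots,D_n$ of $S_{G\circ F}$ as a cyclic rotation of those of $S_{F\circ G}$, so that Theorem~\ref{thmDlou} applies directly. First I would recall that a contracting germ $H$ is a Dloussky germ precisely when it factors as a tower of blow-ups followed by a germ of biholomorphism onto a point of the last exceptional divisor; so it suffices to produce such a factorization for $G\circ F$. Writing $F=\Pi_1\circ\dots\circ\Pi_n\circ\sigma$ and $G=\Pi'_1\circ\dots\circ\Pi'_m\circ\sigma'$, the key observation is that $\sigma$ is a germ of biholomorphism $\germ{\set{C}^2}{0}\to\germ{\wh{B}}{O_n}$, so $\Pi'_1\circ\dots\circ\Pi'_m\circ\sigma'$ composed after $\Pi_1\circ\dots\circ\Pi_n\circ\sigma$ can be rearranged: one conjugates the tower $\Pi'_1\circ\dots\circ\Pi'_m$ by $\sigma$ to obtain a tower of blow-ups of the germ $\germ{\wh{B}}{O_n}$, whose first center is $O_n$. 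Concatenating this conjugated tower after $\Pi_1\circ\dots\circ\Pi_n$ yields a genuine tower of blow-ups $\widetilde{\Pi}$ of $\germ{\set{C}^2}{0}$ with $n+m$ blow-ups, and then $G\circ F=\widetilde{\Pi}\circ\sigma'$ exhibits $G\circ F$ as a Dloussky germ; symmetrically for $F\circ G$.

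Next I would identify the induced germs. For $G\circ F$, the tower $\widetilde{\Pi}$ has first $n$ blow-ups those of $F$ and last $m$ those of $G$ (pushed forward by $\sigma$); by formula~\eqref{defFk} the associated contracting germs $H_1,\dots,H_{n+m}$ of $S_{G\circ F}$ split into a first block $H_1,\dots,H_n$ built from the $F$-blow-ups and a second block $H_{n+1},\dots,H_{n+m}$ built from the $G$-blow-ups. For $F\circ G$ the roles are swapped, so its germs $H'_1,\dots,H'_{m+n}$ have a first block of length $m$ coming from $G$ and a second block of length $n$ coming from $F$. The claim is then that the list $(H'_1,\dots,H'_{m+n})$ is the cyclic shift of $(H_1,\dots,H_{n+m})$ by $n$ positions, up to conjugacy: concretely $H'_{j}$ is conjugate to $H_{n+j}$ (indices mod $n+m$), with the conjugating biholomorphism being the appropriate partial product of blow-downs and of $\sigma,\sigma'$. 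Checking this amounts to writing both germs out from~\eqref{defFk} and cancelling the matching blow-up/blow-down pairs; the conjugacy appears because the two factorizations of the cyclic word $\Pi_{\text{F-part}}\cdot\Pi_{\text{G-part}}$ differ only by where one "cuts" the cycle. Once one germ from the $S_{G\circ F}$ list is conjugate to one from the $S_{F\circ G}$ list, Theorem~\ref{thmDlou} gives $S_{G\circ F}\simeq S_{F\circ G}$.

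The main obstacle is purely bookkeeping: making the conjugation between the two blocks precise, i.e.\ tracking exactly which composition of blow-downs $\Pi_i$ and of the gluing maps $\sigma,\sigma'$ conjugates $H'_j$ to $H_{n+j}$, and verifying that the centers of the pushed-forward tower $\sigma(\Pi'_k)$ really do form an admissible tower (each center lying on the previous exceptional divisor), which is where one uses that $\sigma$ is a biholomorphism carrying $0$ to $O_n$ and hence carries the exceptional configuration of $\Pi'$ to a valid one over $O_n$. I do not expect any genuine difficulty beyond this; the statement is essentially the assertion that conjugacy classes of Dloussky germs depend only on the cyclic word of blow-ups, and composition of germs corresponds to concatenation of these words, so $F\circ G$ and $G\circ F$ give the same cyclic word.
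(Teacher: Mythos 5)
Your proposal follows essentially the same route as the paper: lift the biholomorphism of one factor over the tower of blow-ups of the other (equivalently, push the tower forward), concatenate the two towers to exhibit the composition as a Dloussky germ, and then recognize the other composition as conjugate to one of the intermediate germs of~\eqref{defFk}, so that Theorem~\ref{thmDlou} applies. One bookkeeping slip to correct: pushing $\Pi'_1\circ\dots\circ\Pi'_m$ forward by $\sigma$ and appending it to $\Pi_1\circ\dots\circ\Pi_n$ factors $F\circ G$ (not $G\circ F$), and the biholomorphic part of that factorization is $\wt{\sigma}\circ\sigma'$, where $\wt{\sigma}$ is the lift of $\sigma$, rather than $\sigma'$ alone; with this fixed, the single conjugacy you need is exactly the identity $\sigma^{-1}\circ\bigl(\wt{\Pi}'\circ\wt{\sigma}\circ\sigma'\circ\Pi\bigr)\circ\sigma=G\circ F$ that the paper verifies.
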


\begin{proof} Let us write $F=\Pi_1\circ\sigma_1$ and $G=\Pi_2\circ\sigma_2$, with $\Pi_j\colon B_j\to\set{C}^2$, $j\in\{1,2\}$ towers of blow-ups. Denote by $O_1=\sigma_1(0)$. Because blowing-up does not depend on the coordinates, we have that~$\sigma_1$ lifts to a blow-up of~$O_1$. That is, there is $\wt{\Pi}_2\colon\wt{B}_2\to B_1$ a tower of blow-ups of~$O_1$ and $\wt{\sigma}_1\colon B_2\to\wt{B}_2$ such that $\wt{\Pi}_2\circ\wt{\sigma}_1=\sigma_1\circ\Pi_2$. Let us summarize our data in a commutative diagram.
  \[\xymatrix{ & & \wt{B}_2\ar[d]^{\wt{\Pi}_2} \\ B_1\ar[d]_{\Pi_1}& B_2\ar[ur]^{\wt{\sigma}_1}\ar[d]_{\Pi_2} & B_1\ar[d]^{\Pi_1} \\ \germ{\set{C}^2}{0} \ar[ur]^{\sigma_2} \ar[r]_{G} & \germ{\set{C}^2}{0} \ar[ur]^{\sigma_1} \ar[r]_{F} & \germ{\set{C}^2}{0}}\]
  This already shows that $F\circ G$ is a Dloussky germ, and the proof is of course similar for $G\circ F$. Moreover, by Theorem~\ref{thmDlou}, the germ $\wt{\Pi}_2\circ\wt{\sigma}_1\circ\sigma_2\circ\Pi_1$ and its conjugate induce the same Kato surface as $F\circ G$. Finally, it is easily checked that
  \[\sigma_1^{-1}\circ\wt{\Pi}_2\circ\wt{\sigma}_1\circ\sigma_2\circ\Pi_1\circ\sigma_1=G\circ F.\qedhere\]
\end{proof}

Let us now conclude this section with some important remarks about the~$n$ curves $D_1,\dots,D_n$ mentioned above. Consider $G$ a Dloussky germ such that $b_2(S_G)=b_2(S_F)$. Write $G=\Pi'\circ\sigma'$, where $\Pi'=\Pi'_1\circ\dots\circ\Pi'_n$ is a tower of blow-ups. Denote by $E_1,\dots,E_n$ the~$n$ curves in $S_G$ given by the exceptional divisors of $\Pi'_1,\dots,\Pi'_n$, and by $G_k$ the germs defined similarly to~\eqref{defFk}.

\begin{prop}[Dloussky~{\cite[Remark~3.15]{memDlou}}]\label{biholoKatocourbes} Suppose there is $\varphi\colon S_F\to S_G$ a biholomorphism. Then, there is a $k\in\intent{1}{n}$ such that $\varphi(D_j)=E_{j+k}$, $F_j$ is conjugate to $G_{j+k}$, for $j\in\intent{1}{n-k}$; $\varphi(D_j)=E_{j+k-n}$ and $F_j$ is conjugate to $G_{j+k-n}$, for $j\in\intent{n-k+1}{n}$.
\end{prop}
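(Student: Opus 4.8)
The plan is to reduce to the case of an automorphism of $S_F$ using Theorem~\ref{thmDlou} and Proposition~\ref{Sfg=Sgf}, and then to invoke the directedness of the Kato construction. For the reduction: since $b_2(S_G)=b_2(S_F)=n$, Theorem~\ref{thmDlou} gives that $G$ is conjugate to some $F_m$, and a conjugacy of Dloussky germs is a mere change of coordinates, so it induces a biholomorphism $S_G\to S_{F_m}$ carrying $E_j$ to the $j$-th exceptional curve of $S_{F_m}$ and $G_j$ to $(F_m)_j$. On the other hand, writing $F_m=\bigl(\Pi_{m+1}\circ\cdots\circ\Pi_n\circ\sigma\bigr)\circ\bigl(\Pi_1\circ\cdots\circ\Pi_m\bigr)$ and lifting $\sigma$ through a tower exactly as in the proof of Proposition~\ref{Sfg=Sgf}, one checks that $F_m$ is a Dloussky germ whose canonical Kato surface is biholomorphic to $S_F$ via a map carrying the $j$-th exceptional curve of $S_{F_m}$ to $D_{j+m}$ (indices mod~$n$) and $(F_m)_j$ to $F_{j+m}$; the cases $m=n$, where $F_n=\sigma\circ F\circ\sigma^{-1}$, and $m=1$, where the first curve of $S_{F_1}$ is the blow-up of the fixed point $O_1\in D_1$ and hence equals $D_2$, are useful sanity checks. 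Composing, there is a biholomorphism $\Theta\colon S_F\to S_G$ with $\Theta(D_i)=E_{i-m}$ and $F_i$ conjugate to $G_{i-m}$ for all~$i$; so $\theta:=\Theta^{-1}\circ\varphi\in\Aut(S_F)$, and it suffices to prove that any $\theta\in\Aut(S_F)$ satisfies $\theta(D_i)=D_{i+l}$ and $F_i$ conjugate to $F_{i+l}$ for some~$l$. Indeed this then gives $\varphi(D_i)=E_{i+l-m}$ and $F_i$ conjugate to $G_{i+l-m}$, which is the assertion with $k\equiv l-m\pmod n$ after separating the ranges $i\le n-k$ and $i>n-k$.

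Now fix $\theta\in\Aut(S_F)$. The union $D=D_1\cup\cdots\cup D_n$ is the union of all compact curves of $S_F$, hence intrinsic, so $\theta$ permutes its components, $\theta(D_i)=D_{\rho(i)}$ for a permutation $\rho$. Being biholomorphic, $\theta$ carries the local geometry near $D_i$ to that near $D_{\rho(i)}$, and in particular --- since the pointed contracting germ $F_i$, together with the position on $D_i$ of its super-attracting fixed point $O_i$, is recovered from a neighbourhood of $D_i$ (and of its adjacent curves) inside $S_F$, this being the construction~\eqref{defFk} read backwards, cf.\ Dloussky's memoir --- it carries $O_i\in D_i$ to $O_{\rho(i)}\in D_{\rho(i)}$ and conjugates $F_i$ to $F_{\rho(i)}$. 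The cyclic successor $D_{i+1}$ of $D_i$ is characterised intrinsically as the exceptional curve of the blow-up of $O_i$, so $\theta$ preserves this relation and $\rho(i+1)=\rho(i)+1$ cyclically; hence $\rho$ is a rotation $i\mapsto i+l$, and matching the pointed germs then also gives $F_i$ conjugate to $F_{i+l}$.

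The only genuinely non-formal input is the step just made: a priori $\rho$ can only be shown to be an automorphism of the \emph{undirected} weighted dual graph of $D$ --- a cycle with trees attached --- which also admits reflections, and these must be excluded. This is where the directedness of the construction is essential: the cyclic orientation $D_1\to D_2\to\cdots\to D_n\to D_1$ is the one along which the global spherical shell is glued, equivalently the unique one whose associated return germ is a \emph{holomorphic} contracting germ (its opposite would be the inverse of a map $\Pi\circ\sigma$ with $\Pi$ collapsing curves, hence not a germ of holomorphic map at all); since $\theta$ maps a global spherical shell of $S_F$ to another and conjugates the corresponding holomorphic return germs, it preserves this orientation. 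I expect this orientation argument to be the main obstacle to write out carefully; everything else is either the reduction above, resting on the already-established Theorem~\ref{thmDlou} and Proposition~\ref{Sfg=Sgf} together with bookkeeping on blow-up towers, or the intrinsic-recovery facts (maximality of $D$, recovery of the pointed $F_i$) that we quote from Dloussky's memoir.
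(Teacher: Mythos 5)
The paper does not actually prove this proposition: it is imported verbatim from Dloussky's memoir (\cite[Remark~3.15]{memDlou}), so there is no in-paper argument to compare yours against. Judged on its own terms, your architecture is the natural one: reduce to $\theta\in\Aut(S_F)$ by combining Theorem~\ref{thmDlou} with the explicit biholomorphisms $S_{F_m}\simeq S_F$ obtained by lifting $\sigma$ through the partial tower (your index bookkeeping $\Theta(D_i)=E_{i-m}$ is consistent), and then show that an automorphism can only rotate, not reflect, the cyclically ordered configuration. Your identification of the exclusion of reflections as the one genuinely non-formal step, and the proposed mechanism (the return germ is a holomorphic contracting germ when one goes around the cycle in one direction, and the inverse of a non-invertible germ in the other), is the right idea and is essentially how the directedness is used in \cite{memDlou}.

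Two points as written do not hold up, however. First, $O_i$ is not a point of $S_F$: it lies on the exceptional divisor of $\Pi_i$ in the intermediate surface $B_i$ and is itself blown up by $\Pi_{i+1}$, so it is an infinitely near point; and the germ $F_i=\Pi_{i+1}\circ\dots\circ\Pi_n\circ\sigma\circ\Pi_1\circ\dots\circ\Pi_i$ is a return map passing through all $n$ blow-ups and the gluing $\sigma$, so it cannot be ``recovered from a neighbourhood of $D_i$ (and of its adjacent curves)'': such a neighbourhood knows nothing of the centres $O_j$ for $j$ far from $i$, on which $F_i$ genuinely depends. The correct statement is that the conjugacy class of $F_i$ is determined by the pair $(S_F,D_i)$, equivalently by a choice of global spherical shell in its isotopy class --- global information. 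Second, and relatedly, the two facts carrying all the weight --- this intrinsic recovery of the pointed germs and the orientation preservation --- are precisely the content of Dloussky's Remark~3.15 and of the proof of the result quoted here as Theorem~\ref{thmDlou}; quoting them from the memoir makes your argument a reduction of the cited result to its own neighbours rather than an independent proof. That is a legitimate thing to do for a statement the paper itself only cites, but it should be presented as such, and the local-recovery claim must be restated globally for your middle paragraph to be correct.
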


\section{Self-intersection profile of a H\'enon map's Kato surface} \label{secKato2}

In this section, we compute the sequence of blow-ups and the biholomorphism that make the germ of a H\'enon map near~$I^-$ a Dloussky germ. The result is essentially contained in~\cite[Proposition~2.2]{Fav} and~\cite[pp.~25--27]{theseFav} for standard H\'enon maps and the first part of Proposition~\ref{Sfg=Sgf}, due to Favre, gives it for generalized H\'enon maps. It was also already observed by Dloussky and Oeljeklaus~\cite{HenonDlou} for degree~$2$ H\'enon maps. Therefore, our goal is not really this computation but the following.

\begin{prop}\label{critKatoloc} Let $F=F_N\circ\dots\circ F_1$ and $G$ be two generalized H\'enon maps, where $F_1,\dots,F_N$ are standard H\'enon maps. Then, the Kato surfaces $S_F$ and $S_G$ are biholomorphic if and only if $G$ is conjugate near infinity to one of the $F_k\circ\dots\circ F_1\circ F_N\circ\dots\circ F_{k+1}$, $k\in\intent{1}{N}$.
\end{prop}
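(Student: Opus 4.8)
The plan is to deduce Proposition~\ref{critKatoloc} from Theorem~\ref{thmDlou} and Proposition~\ref{Sfg=Sgf} together with the concrete description of the Dloussky structure on the germ of a standard Hénon map at infinity. First I would recall from~\cite{Fav} (and~\cite[pp.~25--27]{theseFav}) that the germ at~$I^-$ of a single standard Hénon map $F_i$ of the form~\eqref{defFiintro} is a Dloussky germ: there is a tower of blow-ups $\Pi^{(i)} = \Pi^{(i)}_1 \circ \dots \circ \Pi^{(i)}_{n_i}$ and a biholomorphism germ $\sigma^{(i)}$ with $F_i = \Pi^{(i)} \circ \sigma^{(i)}$ near infinity (after the standard change of chart putting $I^-=[0:1:0]$ at the origin of an affine chart). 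Consequently the associated contracting germs $(F_i)_k$ of~\eqref{defFk}, $k \in \intent{1}{n_i}$, are defined.

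Next I would iterate the construction of Proposition~\ref{Sfg=Sgf}: writing $F = F_N \circ \dots \circ F_1$ and composing the towers as in the commutative diagram of that proof, the germ of $F$ at infinity is a Dloussky germ $F = \Pi \circ \sigma$ with $\Pi$ a tower of $n := n_1 + \dots + n_N$ blow-ups obtained by concatenating the lifted towers $\wt{\Pi}^{(N)}, \dots, \wt{\Pi}^{(1)}$ in order. Hence $b_2(S_F) = n$. The key combinatorial point, which follows from chasing that diagram and from the definition~\eqref{defFk}, is that among the $n$ contracting germs $F_1, \dots, F_n$ attached to $S_F$, the ones indexed by the $N$ ``junction'' positions (the positions where one passes from the block of $F_i$ to the block of $F_{i+1}$, cyclically) are precisely, up to conjugacy, the germs at infinity of the cyclic compositions $F_k \circ \dots \circ F_1 \circ F_N \circ \dots \circ F_{k+1}$, $k \in \intent{1}{N}$. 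This is exactly the reasoning already used in Proposition~\ref{Sfg=Sgf} to identify $\sigma_1^{-1} \circ \wt{\Pi}_2 \circ \wt{\sigma}_1 \circ \sigma_2 \circ \Pi_1 \circ \sigma_1 = G \circ F$, applied repeatedly to the $N$ factors.

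With this in hand the equivalence is immediate in both directions. If $G$ is conjugate near infinity to some $F_k \circ \dots \circ F_1 \circ F_N \circ \dots \circ F_{k+1}$, then by Proposition~\ref{Sfg=Sgf} (applied $N-1$ times, or directly to the two-block decomposition $(F_k \circ \dots \circ F_1) \circ (F_N \circ \dots \circ F_{k+1})$) we get $S_G \simeq S_{F_k \circ \dots \circ F_{k+1}} \simeq S_F$, using that conjugate germs induce biholomorphic Kato surfaces~\cite{memDlou}. Conversely, if $S_G \simeq S_F$, then $G$ is itself a Dloussky germ (its germ at infinity is, by the above) with $b_2(S_G) = n = b_2(S_F)$, so Theorem~\ref{thmDlou} forces $G$ to be conjugate to one of the $n$ germs $F_1, \dots, F_n$ of $S_F$; it remains to argue that $G$, being (the germ at infinity of) a generalized Hénon map, can only be conjugate to one of the $N$ ``junction'' germs and not to the others. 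Here I would invoke the fact that a generalized Hénon map's germ at infinity has a specific structure — its critical set configuration / self-intersection data, which is exactly what Section~\ref{secKato2} computes — distinguishing the $N$ junction curves from the $n-N$ interior ones; a germ conjugate to a Hénon germ must have the profile of a junction curve.

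The main obstacle is this last point: showing that \emph{only} the $N$ cyclic compositions, and none of the other $n-N$ germs $F_j$, can arise as the germ at infinity of a generalized Hénon map. This is precisely what the self-intersection profile computation of Section~\ref{secKato2} is for — one identifies, among the $n$ curves $D_1, \dots, D_n$, the $N$ curves whose self-intersection (equivalently, the combinatorial type of the associated blow-up tower / the intersection matrix) matches that of a Hénon germ, and checks the other $n-N$ curves have a different profile. I would defer the details of that computation to the body of the section and here only state that it singles out the $N$ junction germs, completing the proof.
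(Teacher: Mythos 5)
Your proposal is correct and follows essentially the same route as the paper: the forward direction via Proposition~\ref{Sfg=Sgf}, and the converse by combining Theorem~\ref{thmDlou}/Proposition~\ref{biholoKatocourbes} with the self-intersection profile of Lemma~\ref{selfinterprofil} to single out the $N$ junction positions (the paper makes your last step precise by noting that the terminal $3$'s are the only entries cyclically preceded by a $2$, and that the germ $G_n$ attached to the last curve of $S_G$ is conjugate to $G$ itself).
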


Here, we say that two generalized H\'enon maps $F$ and $G$ are \emph{conjugate near infinity} if their germs near the indeterminacy point $I^-$ of $F^{-1}$ and $G^{-1}$ are conjugate (as germs).

Let us first consider a standard H\'enon map
\[F\colon\set{C}^2\to\set{C}^2,\quad(z,w)\mapsto(P(z)-aw,z),\]
for some polynomial $P$ of degree $d\geq2$ and $a\in\set{C}^*$. In affine coordinates centered at $I^-$, it can be written as
\[F(t,w)=\left(\frac{t^d}{U(t)-awt^{d-1}},\frac{t^{d-1}}{U(t)-awt^{d-1}}\right),\]
where $U(t)=t^dP\left(\frac{1}{t}\right)$. So, if $\Pi_1(u,v)=(uv,v)$, $\Pi_2(u,v)=(u,u^{d-1}v)$, one can write $F=\Pi_1\circ\Pi_2\circ\eta$, for
\[\eta(t,w)=\left(t,\frac{1}{U(t)-awt^{d-1}}\right).\]
Now, consider $\Pi_{\gamma}(u,v)=(u,uv+\gamma)$, for $\gamma\in\set{C}$ and write $\frac{1}{U(t)}=\sum_{j=0}^{d-2}a_jt^j+t^{d-1}B(t)$, for~$B$ a holomorphic map. Note that $a_0\neq0$, since $P$ is a polynomial of degree~$d$. It is easily checked that
\[\eta=\Pi_{a_0}\circ\dots\circ\Pi_{a_{d-2}}\circ\sigma,\]
for
\begin{equation}\label{sigmaKatoHenon}\sigma(t,w)=\left(t,B(t)+\frac{aw}{U(t)\left(U(t)-awt^{d-1}\right)}\right).\end{equation}
It is clear that $\sigma$ is a germ of biholomorphism, and since $\Pi_1$, $\Pi_2$ and the $\Pi_{a_j}$ are blow-ups in standard charts, this makes $F=\Pi_1\circ\Pi_2\circ\Pi_3\circ\sigma$, for $\Pi_3=\Pi_{a_0}\circ\dots\circ\Pi_{a_{d-2}}$, a Dloussky germ. Combined with Proposition~\ref{Sfg=Sgf}, we obtain the following.

\begin{lem}\label{HenonDlousskyb2} Let $F=F_N\circ\dots\circ F_1$ be a generalized H\'enon map, where the $F_i$ are standard H\'enon maps of degree $d_i$, $i\in\intent{1}{N}$. Then, the germ at infinity of~$F$ is a Dloussky germ inducing a Kato surface $S_F$ with $b_2(S_F)=\sum_{k=1}^N\left(2d_k-1\right)$. 
\end{lem}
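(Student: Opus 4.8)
The plan is to bootstrap from the single‑map computation just carried out and then to iterate Proposition~\ref{Sfg=Sgf}. First I would fix a standard H\'enon map $F_i$ of degree $d_i$: the computation above exhibits its germ at $I^-$, in the coordinates $(t,w)$ used there, as the Dloussky germ $\Pi_1\circ\Pi_2\circ\Pi_3\circ\sigma$ with $\sigma$ as in~\eqref{sigmaKatoHenon}. The point is to count the blow‑ups comprising the tower $\Pi_1\circ\Pi_2\circ\Pi_3$. The map $\Pi_1(u,v)=(uv,v)$ is a single blow‑up; the map $\Pi_2(u,v)=(u,u^{d_i-1}v)$ is the $(d_i-1)$‑fold composition of the elementary blow‑up $(u,v)\mapsto(u,uv)$, each factor being centred at the origin of the preceding chart, which lies on the exceptional divisor just created, so that these factors genuinely assemble into a subtower of $d_i-1$ blow‑ups; and $\Pi_3=\Pi_{a_0}\circ\dots\circ\Pi_{a_{d_i-2}}$ is a further $d_i-1$ blow‑ups. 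Hence the tower attached to the germ at $I^-$ of $F_i$ has $1+(d_i-1)+(d_i-1)=2d_i-1$ blow‑ups, and Dloussky's identification of $b_2$ with the number of blow‑ups (\cite[Lemma~1.10]{memDlou}) gives $b_2(S_{F_i})=2d_i-1$.

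Next I would pass to the composition. Each $F_i$ is holomorphic in a neighbourhood of $I^-=[0:1:0]$ --- its only indeterminacy point is $I^+=[0:0:1]$ --- fixes $I^-$ and is contracting there, so the germ at $I^-$ of $F=F_N\circ\dots\circ F_1$ is precisely the composition of the germs at $I^-$ of $F_N,\dots,F_1$. Applying Proposition~\ref{Sfg=Sgf} $N-1$ times shows that this germ is again a Dloussky germ, which already gives the first assertion of the lemma.

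For the Betti number I would use that $b_2$ is additive under composition of Dloussky germs, a fact contained in the proof of Proposition~\ref{Sfg=Sgf}: for Dloussky germs $H=\Pi\circ\sigma$ and $H'=\Pi'\circ\sigma'$, that proof produces $H\circ H'=\Pi\circ\wt{\Pi}'\circ\wt{\sigma}\circ\sigma'$, where $\wt{\Pi}'$ is the lift of the tower $\Pi'$ through the germ of biholomorphism $\sigma$. Lifting a tower of blow‑ups through a biholomorphism preserves its combinatorial type, so $\wt{\Pi}'$ has as many blow‑ups as $\Pi'$, and the tower $\Pi\circ\wt{\Pi}'$ of $H\circ H'$ therefore has exactly the number of blow‑ups of $\Pi$ plus the number of blow‑ups of $\Pi'$; that is, $b_2(S_{H\circ H'})=b_2(S_H)+b_2(S_{H'})$. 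Combined with the base case of the first step, an induction on $N$ yields $b_2(S_F)=\sum_{k=1}^{N}b_2(S_{F_k})=\sum_{k=1}^{N}(2d_k-1)$.

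The argument is essentially bookkeeping, and the two points I expect to need care are: recognising $\Pi_2(u,v)=(u,u^{d_i-1}v)$ as a composition of $d_i-1$ blow‑ups rather than a single one --- this is where the ``$2d_i$'' rather than a ``$d_i$'' enters --- and extracting the additivity of $b_2$ from the proof of Proposition~\ref{Sfg=Sgf}, which as stated only records that the Kato surfaces are biholomorphic.
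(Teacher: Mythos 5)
Your proof is correct and follows essentially the same route as the paper: the explicit decomposition of a standard H\'enon germ at $I^-$ into $1+(d_i-1)+(d_i-1)=2d_i-1$ blow-ups followed by the biholomorphism $\sigma$ of~\eqref{sigmaKatoHenon}, then iteration of Proposition~\ref{Sfg=Sgf} together with Dloussky's identification of $b_2$ with the number of blow-ups. The only point you spell out that the paper leaves implicit is the additivity of the blow-up count under composition of Dloussky germs, which is indeed exactly what the lifted tower $\wt{\Pi}'$ in the proof of Proposition~\ref{Sfg=Sgf} provides.
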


Denote by $n=b_2\left(S_F\right)$ and by $D_1,\dots,D_n$ the~$n$ compact curves which are the projections of the exceptional divisors in $S_F$. For $i\in\intent{1}{n}$, denote by $D_i\cdot D_i$ the number of self-intersections of~$D_i$.

\begin{lem}\label{selfinterprofil} With the notations of Lemma~\ref{HenonDlousskyb2}, the self-intersection profile of $S_F$, that is the $n$-tuple $(-D_1\cdot D_1,\dots,-D_n\cdot D_n)$ is
  \begin{equation}\label{DlSHenongen}(d_1,\underbrace{2,\dots,2}_{2d_1-3~\text{times}},3,d_2,\underbrace{2,\dots,2}_{2d_2-3~\text{times}},3,\dots,d_N\underbrace{2,\dots,2}_{2d_N-3~\text{times}},3).\end{equation}
\end{lem}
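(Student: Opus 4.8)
The plan is to reduce the computation of the self-intersection profile of $S_F$ for a general composition $F=F_N\circ\dots\circ F_1$ to the case of a single standard H\'enon map, using Proposition~\ref{Sfg=Sgf} together with Proposition~\ref{biholoKatocourbes}. So I would first treat the case $N=1$ of a single standard H\'enon map $F$ of degree $d$, using the explicit factorization $F=\Pi_1\circ\Pi_2\circ\Pi_3\circ\sigma$ established just above, where $\Pi_1(u,v)=(uv,v)$, $\Pi_2(u,v)=(u,u^{d-1}v)$ and $\Pi_3=\Pi_{a_0}\circ\dots\circ\Pi_{a_{d-2}}$ with $\Pi_\gamma(u,v)=(u,uv+\gamma)$. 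Here the key point is to identify, for each elementary blow-up in this tower, \emph{which} point is blown up — in particular whether it is a free point (not lying on the previous exceptional divisor, or lying at a smooth point of the total transform) or a satellite point (an intersection of two components of the exceptional divisor). The self-intersection of the curve $D_k$ in $S_F$ is $-1$ minus the number of \emph{later} blow-ups centered at a point of (the strict transform of) the $k$-th exceptional divisor, so I would carefully track, through the charts $\Pi_1,\Pi_2,\Pi_{a_j}$, how many subsequent centers $O_{k},O_{k+1},\dots$ land on the strict transform of each exceptional divisor, and also account for the effect of the gluing $\sigma\circ\Pi$, which blows up one further point on the last curve. Concretely: in the chart language, $\Pi_1$ creates $D_1$; the center of $\Pi_2$ lies on $D_1$; the center of the first $\Pi_{a_j}$ lies on $D_2\cap D_1$ or just on $D_2$ depending on the charts, and the subsequent $\Pi_{a_j}$ are each centered on the previous exceptional divisor in a chain; finally $\sigma$ contributes the last center. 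Bookkeeping of these incidences should yield the profile $(d,\underbrace{2,\dots,2}_{2d-3},3)$ for a single map, which is the $N=1$ case of~\eqref{DlSHenongen}.

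For the general case, the factorization used to prove Lemma~\ref{HenonDlousskyb2} writes the germ at infinity of $F=F_N\circ\dots\circ F_1$ as a tower of blow-ups followed by a biholomorphism, obtained by concatenating the towers of the $F_i$ via the lifting procedure in the proof of Proposition~\ref{Sfg=Sgf}. Because each lifting $\wt\Pi$ of a biholomorphism along a tower of blow-ups is again a tower of blow-ups that is an isomorphism away from the exceptional locus, the \emph{incidence pattern} of the centers inside each block coming from $F_i$ is unchanged by the concatenation: the curves produced by the sub-tower attached to $F_i$ have the same self-intersections as in the single-map case $(-D_1\cdot D_1,\dots)=(d_i,2,\dots,2,3)$, \emph{except} possibly for interactions at the junctions between consecutive blocks. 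So the heart of the argument is to check that the blocks do not interfere: the last curve of the $F_i$-block (the one that would be a $3$ in isolation) and the first curve of the $F_{i+1}$-block (the one that would be a $d_{i+1}$) keep their self-intersections, because the center of the first blow-up of the $F_{i+1}$-block lies at a point of the ambient $\set{C}^2$-chart disjoint from the strict transforms of the earlier exceptional divisors — more precisely, $\sigma_i$ maps $0$ to a point $O$ and the subsequent tower for $F_{i+1}$ is centered over that point, which is a free point with respect to all previously created curves. I would make this precise by examining the explicit form~\eqref{sigmaKatoHenon} of $\sigma$ and noting that the image point $O=\sigma(0)$ does not lie on the total transform of the previous exceptional divisor, so no later center sits on those curves.

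The main obstacle I anticipate is precisely this junction analysis: one must be sure that when the biholomorphism $\sigma_i$ of the $i$-th block is lifted through the remaining towers (to turn the composite into a genuine Dloussky germ), the lifted centers do not accidentally land on a previously created curve and thereby change its self-intersection — and dually that the gluing $\sigma\circ\Pi$ at the very end blows up a point of $D_n$ (raising its self-intersection from $-1$ to $-2$, which combined with the one pre-existing later blow-up in the single-map count gives the trailing $3$) and of no other $D_k$. A clean way to handle both is to work entirely in the fixed affine chart centered at $I^-$ and exhibit, for each elementary blow-up in the full concatenated tower, explicit coordinates in which the center is visibly either free or satellite; the pattern $(d_i,2,\dots,2,3)$ then reads off as: the first curve $D$ of a block is hit by the next $d_i-1$ centers (giving self-intersection $-1-(d_i-1)=-d_i$), each of the middle curves is hit by exactly one later center ($-2$), and the last curve of a block is hit by one later center from within the block \emph{plus} (for the very last block) the gluing center, or (for intermediate blocks) one center belonging to the lifting of the next block's tower — in all cases exactly two, giving $-3$. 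Assembling these contributions over the $N$ blocks produces exactly~\eqref{DlSHenongen}.
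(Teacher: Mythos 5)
Your overall strategy --- factor a single standard H\'enon map as the explicit tower $\Pi_1\circ\Pi_2\circ\Pi_3\circ\sigma$, record for each exceptional curve how many later centers lie on its strict transform, then concatenate the blocks via Proposition~\ref{Sfg=Sgf} --- is exactly the paper's. In the single-map case your outline is essentially right, though you leave unresolved precisely the point that must be settled there: the first center of $\Pi_3$ is $(0,a_0)$ with $a_0\neq0$ (because $\deg P=d$ exactly), so it lies on the last exceptional divisor of $\Pi_2$ \emph{only} and is a free point of it; this is what caps $D_1$ at $-d$ and gives that curve a $-2$ rather than leaving the ``$D_2\cap D_1$ or just $D_2$'' alternative open.

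The genuine gap is in your junction analysis, where you assert that $O=\sigma_i(0)$ ``does not lie on the total transform of the previous exceptional divisor, so no later center sits on those curves.'' This is false, and it cannot be repaired: by the very definition of a Dloussky germ, $\sigma$ sends $0$ to a point $O_n\in\Pi_n^{-1}(O_{n-1})$, i.e.\ \emph{onto} the last exceptional divisor of the block (explicitly, from~\eqref{sigmaKatoHenon}, $\sigma(0)=(0,B(0))\in\{u=0\}$, which is that divisor in the last chart). Moreover $\sigma_i$ maps the whole line at infinity $\{t=0\}$ into $\{u=0\}$, so the second blow-up of the next block (the first one of its $\Pi_2$, whose center sits on the strict transform of $\{t=0\}$) hits that divisor as well. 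These two hits are exactly what turn the last curve of each intermediate block from $-1$ into $-3$; under your premise it would stay at $-1$ and the profile would be wrong. The same miscount appears at the very end: the gluing does not ``blow up one further point'' of $D_n$; it identifies $D_n$ with the strict transform of $\{t=0\}$, which passes through two centers ($O_0$ and the center of the first blow-up of $\Pi_2$), and your ``one pre-existing later blow-up from within the block'' does not exist, since nothing in a block's tower is centered on that block's last exceptional divisor. Your final tally of two extra centers per block is numerically correct but is obtained by an attribution that contradicts your own no-interference claim; the statement you actually need to verify from~\eqref{sigmaKatoHenon} is the opposite one, namely that the first two centers of each block land on the previous block's last exceptional divisor and on no other earlier curve.
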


\begin{proof} This is essentially counting and following strict transforms of exceptional divisors. Consider first a standard H\'enon map and keep notations above Lemma~\ref{HenonDlousskyb2}. The map~$\Pi_2$ consists of blowing up $d-1$ times a point of the strict transform of~$D_1$. Indeed, the strict transform of $D_1$ is defined in coordinates by $\{v=0\}$. Next, $\Pi_3$ blows up a point which belongs to $D_{d}$, but not to any other divisor since $a_0\neq0$. All the other maps of~$\Pi_3$ blow up a point that is only in the last appeared exceptional divisor. Finally, $D_n=\{u=0\}$ in the last coordinates, and by~\eqref{sigmaKatoHenon} is glued with $\{u=0\}$ via $\sigma\circ\Pi$ in the Kato surface. Therefore, it will be blown up by $\Pi_1$, by the first blow-up of $\Pi_2$ and will be left alone after. We obtain the result for standard H\'enon maps.

  We let the reader check the result for generalized H\'enon maps, using Proposition~\ref{Sfg=Sgf} and the form of the map~$\sigma$ in~\eqref{sigmaKatoHenon} to follow the last exceptional divisor of each standard H\'enon map.
\end{proof}

Before concluding the section with the proof of Proposition~\ref{critKatoloc}, let us mention the following consequence of the above computation, that we will use in the next section.
  
\begin{lem}\label{Henonlocegaldeg} Let $F=F_N\circ\dots\circ F_1$ and $G=G_P\circ\dots\circ G_1$ be generalized H\'enon maps, with $F_N,\dots,F_1,G_P,\dots,G_1$ standard H\'enon maps. Suppose that~$F$ and~$G$ are conjugate near infinity. Then, $N=P$ and $\deg F_k=\deg G_k$, for each $k\in\intent{1}{N}$.
\end{lem}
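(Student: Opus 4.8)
The plan is to deduce Lemma~\ref{Henonlocegaldeg} from the self-intersection profile computation of Lemma~\ref{selfinterprofil} together with the biholomorphism correspondence of Proposition~\ref{biholoKatocourbes}. First I would observe that if $F$ and $G$ are conjugate near infinity, then their germs at $I^-$ are conjugate as germs, hence they are conjugate Dloussky germs; in particular $S_F$ and $S_G$ are biholomorphic (two conjugate Dloussky germs induce biholomorphic Kato surfaces, since conjugation just amounts to a change of coordinates in the construction). Therefore $b_2(S_F)=b_2(S_G)$, and by Proposition~\ref{biholoKatocourbes} there is a biholomorphism $\varphi\colon S_F\to S_G$ and an index $k$ realizing a cyclic identification of the compact curves $D_j$ of $S_F$ with the compact curves $E_j$ of $S_G$, with $\varphi(D_j)=E_{\pi(j)}$ where $\pi$ is the cyclic shift by $k$.

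Next I would use that a biholomorphism preserves self-intersection numbers of compact curves: $D_j\cdot D_j=E_{\pi(j)}\cdot E_{\pi(j)}$ for all $j$. By Lemma~\ref{selfinterprofil}, the self-intersection profile of $S_F$ is the cyclic word obtained by concatenating, for $i=1,\dots,N$, the block $(d_i,2,\dots,2,3)$ of length $2d_i-1$ (with $2d_i-3$ twos), and similarly for $S_G$ with blocks indexed by $i=1,\dots,P$ and degrees $\deg G_i$. The hypothesis, via Proposition~\ref{biholoKatocourbes}, says precisely that these two cyclic words coincide (one is a cyclic rotation of the other). So the whole problem reduces to a combinatorial uniqueness statement: a finite cyclic word over $\{2,3,d : d\ge 2\}$ admits at most one decomposition (up to cyclic rotation of the list of blocks) into consecutive blocks of the form $(d,\underbrace{2,\dots,2}_{2d-3},3)$ with $d\ge 2$.

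To prove that combinatorial claim I would argue that in such a cyclic word the positions of the entries that are $\ne 2$ are intrinsic: each block contributes exactly one entry $\ge 4$ or a ``$2$''-valued first entry together with a trailing $3$ --- more carefully, note $d_i\ge 2$, so the first entry of a block is either $2$ (when $d_i=2$, in which case the block is just $(2,3)$ of length $2d_i-1=1$... wait, $2d_i-3=1$ and the block is $(2,2,3)$? let me re-read: for $d_i=2$ the block is $(d_i,3)$ with $2d_i-3=1$ two, i.e.\ $(2,2,3)$ of length $3$). The key point is that a $3$ occurs exactly at the end of each block and nowhere else (the interior entries are $2$ and the leading entry is $d_i\ge 2$ but is not a $3$ since the construction distinguishes it --- here one must be slightly careful when $d_i=2$, but the leading $2$ is immediately followed by either more $2$'s or by the trailing $3$, whereas a trailing $3$ is followed by the leading entry of the next block). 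Thus the trailing $3$'s mark the block boundaries unambiguously in the cyclic word, so the decomposition into blocks is forced up to rotation; reading off the leading entry of each block recovers the sequence of degrees up to cyclic permutation. Finally, since conjugation near infinity gives a biholomorphism with a \emph{fixed} identification pattern (the cyclic shift $\pi$), matching the blocks shows $N=P$ and, after the appropriate cyclic relabelling, $\deg F_k=\deg G_k$ for each $k$; but in fact the statement as phrased asks for equality with the given indexing, which follows because the hypothesis ``conjugate near infinity'' pins down which curve of $S_F$ (the one coming from the last blow-up of the last standard factor) maps to which curve of $S_G$, fixing the rotation to be trivial. The main obstacle is precisely this bookkeeping: verifying that the trailing $3$'s (and not the occasional leading $2$ when some $d_i=2$) genuinely delimit the blocks, i.e.\ that no spurious alternative block decomposition of the cyclic word exists; everything else is a direct invocation of Propositions~\ref{biholoKatocourbes} and Lemma~\ref{selfinterprofil}.
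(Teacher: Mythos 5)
Your overall route is the one the paper intends: the lemma is stated there without proof, as a direct consequence of Lemma~\ref{selfinterprofil} and the curve correspondence of Proposition~\ref{biholoKatocourbes}, and the disambiguation of the cyclic word is exactly the point made in the paper's proof of Proposition~\ref{critKatoloc}. However, your block-boundary criterion contains a concrete slip. You assert that the leading entry $d_i$ of a block ``is not a $3$'' and then worry only about the case $d_i=2$. The case that actually threatens the argument is $d_i=3$: the block is then $(3,2,2,2,3)$, which both begins and ends with a $3$, so the $3$'s alone do not delimit the blocks, and your proposed test (``a trailing $3$ is followed by the leading entry of the next block, a leading $2$ is followed by $2$'s or the trailing $3$'') does not separate the two occurrences, since a leading $3$ is also followed by $2$'s and a trailing $3$ may be followed by a $2$. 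The correct observation --- the one used in the paper's proof of Proposition~\ref{critKatoloc} --- is that $2d_i-3\geq1$ for every $d_i\geq2$, so a trailing $3$ is always cyclically \emph{preceded} by a $2$, whereas a leading entry (whether $2$, $3$ or larger) is always preceded by the trailing $3$ of the previous block; hence the trailing $3$'s are exactly the $3$'s preceded by a $2$, and the block decomposition is forced. The leading $2$'s you single out are harmless, since $2$'s never mark boundaries. With this correction the rest of your argument is sound. Note also that for this particular lemma the cyclic uniqueness is more than you need: since conjugate germs determine identified towers of blow-ups, the two profiles agree as \emph{linear} words with matched starting points, and $N=P$ together with $\deg F_k=\deg G_k$ can then be read off block by block from the left, the first entry giving $d_1=\deg G_1$ and hence equal block lengths, and so on inductively.
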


\begin{proof}[Proof of Proposition~\ref{critKatoloc}] Suppose that~$G$ is conjugate to $F_k\circ\dots\circ F_1\circ F_N\circ\dots\circ F_{k+1}$ near infinity. Then, Proposition~\ref{Sfg=Sgf} clearly implies that $S_F$ is biholomorphic to $S_G$.

  Conversely, suppose that $S_F$ and $S_G$ are biholomorphic. We apply Proposition~\ref{biholoKatocourbes} and use its notations. Then, we should have $D_j\cdot D_j=E_{j+k}\cdot E_{j+k}$, for $j\in\intent{1}{n-k}$ and $D_j\cdot D_j=E_{j+k-n}\cdot E_{j+k-n}$, for $j\in\intent{n-k+1}{n}$. Now, we use Lemma~\ref{selfinterprofil}. Note that the~$3$ which appear in our writing (that is, not the~$d_i$ which could also be equal to~$3$) are the only ones which are (cyclically) preceeded by a~$2$. Therefore, they are sent one to another by~$\varphi$. Since $G_n$ is conjugate to~$G$, we obtain the result by applying once again Proposition~\ref{biholoKatocourbes}.
\end{proof}

\section{The germ at infinity of H\'enon maps}\label{secnormalform}

In this section, we prove the following. Combined with Proposition~\ref{critKatoloc}, we obtain Theorem~\ref{critKato}.

\begin{thm}\label{Henonlocaff} Two generalized H\'enon maps are conjugate near infinity if and only if they are affinely conjugate.
\end{thm}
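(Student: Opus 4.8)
The plan is to treat the two implications separately; the forward direction (affine conjugacy forces conjugacy near infinity) is immediate, and after two harmless normalizations the converse reduces to computing an explicit Favre normal form and reading off that it determines the H\'enon map. For the easy direction: if $G=A\circ F\circ A^{-1}$ for an affine automorphism $A$ of $\set{C}^2$, then $A$ extends to an automorphism of $\proj{2}$ preserving the line at infinity, and since this extension conjugates the birational extensions of $F$ and $G$ it must carry the indeterminacy point $I^-$ of $F^{-1}$ to that of $G^{-1}$; restricted to a neighbourhood of $I^-$ it is then a biholomorphism of germs conjugating the germ of $F$ at infinity to that of $G$, so $F$ and $G$ are conjugate near infinity. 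For the converse, suppose $F=F_N\circ\dots\circ F_1$ and $G=G_P\circ\dots\circ G_1$ are conjugate near infinity. By Lemma~\ref{Henonlocegaldeg} we have $P=N$ and $\deg F_k=\deg G_k=:d_k$ for all $k$; and by the easy direction we may freely conjugate $F$ and $G$ by affine maps, so we normalize all the polynomials defining the $F_i$ and the $G_i$ to be monic and centered. It then suffices to prove that two such normalized generalized H\'enon maps that are conjugate near infinity differ only by the residual affine symmetries compatible with the normalization.

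The core of the argument is an explicit computation of a normal form for the germ at $I^-$, following Favre's three-step procedure~\cite{Fav}. Starting from the expression of a standard H\'enon factor in coordinates centered at $I^-$ used in Section~\ref{secKato2} and composing the $N$ factors, step one is to pass to B\"ottcher coordinates; there the germ of $F$ has a principal monomial part governed by the degree $d=\prod_k d_k$ of $F$, perturbed by higher order terms. The key point, which I would establish by carrying along the first $2d-1$ coefficients of every intermediate map, is that steps two and three, which conjugate by an essentially tangent to the identity diffeomorphism, amount in effect to deleting all terms of degree exceeding $2d-1$. Consequently the normal form is encoded by a finite tuple $(\beta_i)_{i\in\intent{1}{K}}$ of coefficients.

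Next I would compare $(\beta_i)$ with the tuple $(\alpha_i)_{i\in\intent{1}{K}}$ of natural coefficients of $F$, namely the coefficients of its monic centered polynomials together with the Jacobian parameters $a_i$. Put in a suitable order, the dependence is triangular and algebraic, $\beta_i=u_i\alpha_i+R_i(\alpha_1,\dots,\alpha_{i-1})$ with $u_i\in\set{C}^*$ and $R_i$ universal polynomials: concretely the top coefficients of the normal form involve only the last polynomial $P_N$ and recover it, after which one strips off $F_N$ and argues by induction on $F_{N-1}\circ\dots\circ F_1$. In particular such a normal form determines the normalized generalized H\'enon map uniquely. Finally I would invoke Favre's criterion~\cite{Fav} characterizing when two rigid germs in normal form are conjugate; fed with the explicit normal forms above it becomes a relation between $(\beta_i^F)$ and $(\beta_i^G)$, and transporting this relation back through the triangular substitution $\alpha\mapsto\beta$ shows that $F$ and $G$ differ only by a residual affine symmetry, hence are affinely conjugate. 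Together with the easy direction this yields the theorem.

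The main obstacle is the explicit execution of Favre's three-step algorithm in B\"ottcher coordinates and the proof that its last two steps act exactly as the announced truncation, together with the verification that $\alpha\mapsto\beta$ is triangular and compatible with the inductive peeling-off of factors; essentially all the difficulty of the statement lies there.
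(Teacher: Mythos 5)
Your proposal is correct and follows essentially the same route as the paper: normalize to monic centered factors, compute the Favre normal form by carrying the first $2D_N-1$ coefficients through B\"ottcher coordinates and observing that the last two steps act as a truncation, deduce injectivity of $F\mapsto F^{(3)}$ by a triangular/inductive peeling-off of the factors $F_N,\dots,F_1$, and finish with Favre's conjugacy criterion for normal forms, identifying the residual symmetry as the affine map $(z,w)\mapsto(\zeta z,\zeta^{D_{N-1}}w)$. The only cosmetic difference is that the paper realizes this last step by exhibiting the explicit affine conjugate $H=\theta\circ F\circ\theta^{-1}$ and applying the injectivity statement, rather than transporting Favre's relation back through the coefficient substitution, but these are the same argument.
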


Our proof involves computing the normal form of a generalized H\'enon map's germ near infinity in the sense of Favre~\cite{Fav}. We follow the three steps of his proof to obtain one particular normal form and then use his criterion to see which ones are conjugate.

First, let us introduce some notations. Let $F=F_N\circ\dots\circ F_1$ be a generalized H\'enon map, where the $F_i$, $i\in\intent{1}{N}$ are standard H\'enon maps. That is, $F_i(z,w)=\left(P_i(z)-a_iw,z\right)$, $(z,w)\in\set{C}^2$. Denote by $d_i=\deg P_i$ and $D_i=\prod_{j=1}^id_j$, $i\in\intent{1}{N}$. Note that $D_N=\deg F$. By convention, set also $D_0=1$. We will suppose that the polynomials $P_i$, $i\in\intent{1}{N}$, are monic and centered, \emph{i.e.} $P_i(z)=z^{d_i}+O\left(z^{d_i-2}\right)$. In fact, replacing $F$ by an affine conjugate, we can always do so~\cite{FriMil}. Note $Q_i(z,w)$, $i\in\intent{0}{N}$ the polynomials in two variables so that
\[F_i\circ\dots\circ F_1(z,w)=\left(Q_i(z,w),Q_{i-1}(z,w)\right),~i\in\intent{1}{N}.\]
It is easy to see that they are defined by the induction relation
\begin{equation}\label{recQN}Q_0=z,\quad Q_1=P_1(z)-a_1w;\qquad Q_i=P_i\circ Q_{i-1}-a_iQ_{i-2},~i\in\intent{2}{N}.\end{equation}
Denote by $V_i(t,w)=t^{D_i}Q_i\left(\frac{1}{t},\frac{w}{t}\right)$. In affine coordinates centered at infinity, the map~$F$ is of the form
\[F(t,w)=\left(\frac{t^{D_N}}{V_N(t,w)},\frac{t^{D_N-D_{N-1}}V_{N-1}(t,w)}{V_N(t,w)}\right).\]

To begin our way towards the normal form, we use B\"ottcher coordinates. Denote by $(t_n,w_n)=F^n(t,w)$ and
\[\phi(t,w)=\left(\varphi_w(t),w\right),\quad \varphi_w(t)=t\prod\limits_{n\in\set{N}}\left(V_N(t_n,w_n)\right)^{-1/D_N^{n+1}}.\]
Here and everywhere after, we consider the principal branch of the $D_N^{n+1}$-th root.

\begin{lem}\label{lemBottcher} The map $\phi$ conjugates $F$ with $F^{(1)}$ (that is $\phi\circ F\circ\phi^{-1}=F^{(1)}$) of the form
  \[F^{(1)}(x,w)=\left(x^{D_N},\frac{A}{D_N}wx^{2D_N-2}(1+\eta(x,w))+h(x)\right),\]
    with $A=\prod_{i=1}^Na_i$, $\eta(x,w)=O\left(x^2,xw\right)$ and
    \[h(x)=x^{D_N}\varphi_0^{-1}(x)^{-D_{N-1}}V_{N-1}\left(\varphi_0^{-1}(x),0\right)\left(1+O\left(x^{D_N+D_{N-1}}\right)\right).\]
  \end{lem}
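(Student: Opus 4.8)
The statement to prove is Lemma~\ref{lemBottcher}, which asserts that the explicit map $\phi(t,w)=(\varphi_w(t),w)$ conjugates $F$ (in coordinates near $I^-$) to the map $F^{(1)}$ with the prescribed first-order shape. The strategy is direct computation, organized around the two coordinates separately. First I would verify that $\varphi_w$ is a well-defined germ of biholomorphism in $t$ for each fixed small $w$: this is the standard Böttcher argument. Since $V_N(t,w)=1+O(t,w)$ near the origin (because $P_i$ monic forces the top coefficient of $Q_N$ to be $1$, hence $V_N(0,0)=1$), each factor $(V_N(t_n,w_n))^{-1/D_N^{n+1}}$ is close to $1$; one checks $t_n=F^n(t,w)$ contracts to $(0,0)$ geometrically (the first coordinate behaves like $t^{D_N^n}$ to leading order), so the infinite product converges locally uniformly and defines $\varphi_w(t)=t(1+O(t,w))$, a biholomorphism tangent to the identity in $t$. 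The functional equation is the formal heart: from $t_{n+1}=t_n^{D_N}/V_N(t_n,w_n)$ one telescopes the product to get $\varphi_w(t)^{D_N}=\varphi_{w'}(t_1)$ where $(t_1,w')=F(t,w)$, i.e. the first coordinate of $\phi\circ F\circ\phi^{-1}$ is exactly $x\mapsto x^{D_N}$. This is the clean part.

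\textbf{The second coordinate.} The real work is the $w$-component of $F^{(1)}$. Writing $F(t,w)=\bigl(t^{D_N}/V_N,\ t^{D_N-D_{N-1}}V_{N-1}/V_N\bigr)$, the new second coordinate is $w$ itself pushed through unchanged by $\phi$ in the $w$-slot, so I need to express $t^{D_N-D_{N-1}}V_{N-1}(t,w)/V_N(t,w)$ in the variable $x=\varphi_w(t)$ and $w$. I would split this as a sum of the $w=0$ part — which, after substituting $t=\varphi_0^{-1}(x)$ and using $V_N(\varphi_0^{-1}(x),0)^{-1}$ absorbed into the telescoped product, produces exactly the claimed $h(x)$ with its $x^{D_N}\varphi_0^{-1}(x)^{-D_{N-1}}V_{N-1}(\varphi_0^{-1}(x),0)(1+O(x^{D_N+D_{N-1}}))$ form — plus the part linear-and-higher in $w$. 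For the latter, the key input is that $F$ is affine-linear in $w$ in each standard factor with total $w$-coefficient $\pm A=\pm\prod a_i$ (this follows from \eqref{recQN}: $\partial Q_N/\partial w$ is, to leading order in $t$, governed by the product of the $-a_i$). So $\partial_w\bigl(t^{D_N-D_{N-1}}V_{N-1}/V_N\bigr)\big|_{w=0}$ has leading term $\tfrac{A}{D_N}x^{2D_N-2}$ after the change of variables — the factor $1/D_N$ and the exponent $2D_N-2$ come out of differentiating $\varphi_0^{-1}(x)$, which satisfies $\varphi_0^{-1}(x)=x(1+O(x))$ and whose derivative contributes the extra $D_N-2$ in the exponent together with the $t^{D_N-D_{N-1}}$ and the $V_{N-1}$ leading term. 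Collecting the remainder into $\eta(x,w)=O(x^2,xw)$ finishes it.

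\textbf{Main obstacle.} I expect the genuinely delicate point to be bookkeeping the change of variables $t\mapsto x=\varphi_w(t)$ uniformly in $w$ while keeping exact control of the leading exponents and constants $A$, $1/D_N$, $2D_N-2$ — in particular making sure the $w$-dependence of $\varphi_w$ contributes only to the error $\eta$ and not to the leading coefficient. One clean way to handle this: observe $\varphi_w(t)=t\cdot\psi(t,w)$ with $\psi(0,0)=1$ and $\partial_t\psi,\partial_w\psi$ both $O(1)$, compute $\varphi_w^{-1}$ by the implicit function theorem as $x\mapsto x(1+O(x,w))$, and substitute, tracking only terms up to the orders appearing in the statement; everything beyond those orders is swept into $\eta$ and the $O(x^{D_N+D_{N-1}})$ inside $h$. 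The convergence of the Böttcher product (uniform in $w$) should be isolated as a preliminary sublemma so the algebraic identities can then be manipulated freely. No deep idea is needed beyond Favre's setup; it is careful asymptotic expansion, and the only place an error could creep in is mismatching which power of $t$ versus $x$ carries the Jacobian factor.
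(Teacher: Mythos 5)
Your treatment of the first coordinate (telescoping the B\"ottcher product to get $x\mapsto x^{D_N}$) and of $h(x)$ (setting $w=0$ and substituting $t=\varphi_0^{-1}(x)$) matches the paper. The gap is in the linear-in-$w$ part. You propose to obtain the coefficient $\frac{A}{D_N}x^{2D_N-2}$ by directly differentiating $t^{D_N-D_{N-1}}V_{N-1}/V_N$ in $w$, and you justify the appearance of $A=\prod_i a_i$ by asserting that $\partial Q_N/\partial w$ is ``governed by the product of the $-a_i$''. That is not true: from~\eqref{recQN} one gets $\partial_w Q_1=-a_1$ and $\partial_w Q_i=P_i'(Q_{i-1})\,\partial_w Q_{i-1}-a_i\,\partial_w Q_{i-2}$, whose leading term is $-a_1d_2\cdots d_N\,z^{D_N-d_1}$ --- it involves only $a_1$ and the degrees, not $a_2,\dots,a_N$, so your mechanism cannot produce the factor $a_2\cdots a_N$. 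Worse, for $N\geq2$ the two contributions to $\partial_w\bigl[\pi_2F^{(1)}\bigr](x,0)$ --- the explicit $w$-derivative of $G(t,w)=t^{D_N-D_{N-1}}V_{N-1}(t,w)/V_N(t,w)$ and the chain-rule term $\partial_tG\cdot\partial_w\varphi_w^{-1}(x)$ coming from the $w$-dependence of $\varphi_w^{-1}$ --- each have leading order $x^{D_N-D_{N-1}+d_1-1}$, which is strictly below $x^{2D_N-2}$ (for $N=2$ they are $\pm a_1(d_2-1)x^{D_2-1}$). They cancel at that order, and one would then have to prove cancellation of \emph{every} term up to order $2D_N-2$ and identify the surviving coefficient as $A/D_N$; your proposal gives no mechanism for these cancellations.

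The paper sidesteps all of this with one observation you are missing: each $F_i$ has constant Jacobian $a_i$, so after accounting for the coordinate change at infinity and for $\phi$ one gets
\[\det D\bigl(\phi\circ F\circ\phi^{-1}\bigr)(x,w)=A\,x^{3D_N-3}\left(1+O\left(x^2,xw\right)\right).\]
Since the first coordinate of $F^{(1)}$ is exactly $x^{D_N}$, the differential is triangular and $\det DF^{(1)}=D_Nx^{D_N-1}\,\partial_w\bigl(\pi_2F^{(1)}\bigr)$; comparing the two expressions forces $\partial_w\bigl(\pi_2F^{(1)}\bigr)=\frac{A}{D_N}x^{2D_N-2}\left(1+O\left(x^2,xw\right)\right)$, which yields $\lambda=A/D_N$, $q=2D_N-2$ and $\eta=O\left(x^2,xw\right)$ simultaneously, with the cancellations you would otherwise need handled for free. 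You should replace the direct differentiation by this Jacobian computation, or else supply the full cancellation argument, which is substantially harder.
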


  \begin{proof} It is well known that the expression of $\varphi_w$ converges (see~\cite{HOV} for standard H\'enon maps), and $t_n=O\left(t^{D_N}\right)$, $w_n=O\left(t^{D_N-D_{N-1}}\right)$, the~$O$ being uniform in~$n$ in a neighbourhood of~$0$. Hence,
    \begin{equation}\label{exprvarphi}\varphi_w(t)=t\left(V_N(t,w)\right)^{-1/D_N}\left(1+O\left(t^{D_N+D_{N-1}}\right)\right).\end{equation}
    Moreover, these coordinates are designed to have $\pi_1\circ\phi\circ F(t,w)=\varphi_w(t)^{D_N}$, $\pi_1$ being the standard projection on the first coordinate. So, the map $F^{(1)}$ is of the form
    \[F^{(1)}(x,w)=\left(x^{D_N},\lambda wx^q(1+\eta(x,w))+h(x)\right),\]
    for some $\lambda\in\set{C}^*$ and $q\in\set{N}$ that we need first to identify. Note that the differentials of the~$F_i$ are given by
    \[DF_i(t,w)=\begin{pmatrix} d_it^{d_i-1}\left(1+O\left(t^2,tw\right)\right) & a_it^{2d_i-1}\left(1+O\left(t^2,tw\right)\right) \\ \left(d_i-1\right)t^{d_i-2}\left(1+O\left(t^2,tw\right)\right) & a_it^{2d_i-2}\left(1+O\left(t^2,tw\right)\right)\end{pmatrix}.\]
    Therefore, using~\eqref{exprvarphi} and the fact that $V_N(t,w)=1+O\left(t^2,tw\right)$,
    \[\begin{aligned}\det\left(D\left(\phi\circ F\circ\phi^{-1}\right)(x,w)\right)&=\prod\limits_{i=1}^N\det\left(DF_i\left(F_{i-1}\circ\dots\circ F_1(x,w)\right)\right)\left(1+O\left(x^2,xw\right)\right)\\
                                                                       &=\prod\limits_{i=1}^Na_i\left(x^{D_{i-1}}\right)^{3d_i-3}\left(1+O\left(x^2,xw\right)\right)\\
                                                                       &=Ax^{3D_N-3}\left(1+O\left(x^2,xw\right)\right).\end{aligned}\]
    Now, since $\det\left(DF^{(1)}(x,w)\right)=\lambda D_Nx^{q+D_N-1}\left(1+\eta(x,w)+w\der{\eta}{w}(x,w)\right)$, we have $\lambda=\frac{A}{D_N}$, $q=2D_N-2$ and $\eta(x,w)=O\left(x^2,xw\right)$. Finally, $h(x)=\pi_2\circ\phi\circ F\circ\phi^{-1}(x,0)$, where $\pi_2$ is the standard projection on the second coordinate. Thus, since~\eqref{exprvarphi} for $t=\varphi_0^{-1}(x)$ and $w=0$ yields $\frac{\varphi_0^{-1}(x)^{D_N}}{V_N\left(\varphi_0^{-1}(x),0\right)}=x^{D_N}\left(1+O\left(x^{D_N+D_{N-1}}\right)\right)$, we obtain the form wanted.                                                       
 \end{proof}

 Next, denote by
 \begin{equation}\label{defphiipsi}\varphi_{0,i}(t)=t\left(V_i(t,0)\right)^{-1/D_i}~\text{and}~\psi_i=\varphi_{0,i}\circ\varphi_{0,i-1}^{-1},\quad i\in\intent{1}{N}.\end{equation}
 By~\eqref{exprvarphi}, we have $\varphi_0(t)=\varphi_{0,N}(t)\left(1+O\left(t^{D_N+D_{N-1}}\right)\right)$ and by Lemma~\ref{lemBottcher},
 \begin{equation}\label{exprhpsi}h(x)=x^{D_N}\left(\psi_N^{-1}(x)\right)^{-D_{N-1}}\left(1+O\left(x^{D_N+D_{N-1}}\right)\right).\end{equation}
 We continue our way towards the normal form.

 \begin{lem}\label{normalformstep2}Keep the notations of Lemma~\ref{lemBottcher}. The germ $F^{(1)}$ is conjugate to the germ
   \[F^{(2)}(x,y)=\left(x^{D_N},\frac{A}{D_N}yx^{2D_N-2}+\wt{h}(x)\right),\]
   with $\wt{h}(x)=h(x)\left(1+O\left(x^{D_N+D_{N-1}}\right)\right)$.
   \end{lem}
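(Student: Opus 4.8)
This is the second step of Favre's normalization procedure, whose purpose is to replace the coefficient $\frac{A}{D_N}x^{2D_N-2}(1+\eta(x,w))$ of the linear-in-$w$ part of the second component by the pure monomial $\frac{A}{D_N}x^{2D_N-2}$. The plan is to look for a conjugating germ fixing the first coordinate, $\Psi(x,w)=(x,\Theta(x,w))$ with $\Theta(x,w)=w+\theta(x,w)$, where $\theta$ has order $\geq 2$ and $\theta(x,0)\equiv 0$, so that $\Psi$ is a tangent-to-the-identity germ of biholomorphism preserving the fibration $\{x=\mathrm{cst}\}$. Writing $G(x,w)=\frac{A}{D_N}x^{2D_N-2}w(1+\eta(x,w))+h(x)$ for the second component of $F^{(1)}$, a direct computation shows that $\Psi\circ F^{(1)}\circ\Psi^{-1}$ has the announced form $F^{(2)}$ if and only if
\begin{equation}\label{eqplan1}\Theta\bigl(x^{D_N},G(x,w)\bigr)=\tfrac{A}{D_N}x^{2D_N-2}\,\Theta(x,w)+\wt{h}(x).\end{equation}
Setting $w=0$ in \eqref{eqplan1} forces $\wt{h}(x)=h(x)+\theta\bigl(x^{D_N},h(x)\bigr)$; substituting this back and cancelling the common terms $\frac{A}{D_N}x^{2D_N-2}w$ and $h(x)$ from both sides reduces \eqref{eqplan1} to the functional equation
\begin{equation}\label{eqplan2}\theta(x,w)=w\,\eta(x,w)+\tfrac{D_N}{A}\,x^{2-2D_N}\Bigl[\theta\bigl(x^{D_N},G(x,w)\bigr)-\theta\bigl(x^{D_N},h(x)\bigr)\Bigr].\end{equation}

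I would then solve \eqref{eqplan2} by a recursive, fixed-point argument. Since $G(x,w)-h(x)=\frac{A}{D_N}x^{2D_N-2}w(1+\eta(x,w))$ carries the factor $x^{2D_N-2}$, a Taylor expansion in the second variable gives $\theta(x^{D_N},G)-\theta(x^{D_N},h)=\frac{A}{D_N}x^{2D_N-2}w(1+\eta)\int_0^1\partial_w\theta\bigl(x^{D_N},h(x)+s(G(x,w)-h(x))\bigr)\,ds$, so the right-hand side of \eqref{eqplan2} is a holomorphic germ depending linearly on $\theta$ and vanishing to order $\geq 2$; write it as $\theta\mapsto w\eta+T(\theta)$. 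Grading monomials by the weight $\mathrm{wt}(x)=1$, $\mathrm{wt}(w)=D_N-D_{N-1}$ — for which, by Lemma~\ref{lemBottcher}, $h(x)$ is a unit times $x^{D_N-D_{N-1}}$ (weighted order $D_N-D_{N-1}$) and $w\eta$ has weighted order $\geq D_N-D_{N-1}+2$ — a short bookkeeping shows that $T$ does not decrease weighted order and that, on the weighted-homogeneous part of degree $\ell$, the weight-preserving component of $T$ takes values in the line spanned by the monomial $x^{\ell-(D_N-D_{N-1})}w$. Solving \eqref{eqplan2} weighted degree by weighted degree therefore determines $\theta$ uniquely, with no obstruction (the only degree at which the unknown $\theta_\ell$ would feed back into the coefficient it governs is $\ell=D_N-D_{N-1}$, where both sides vanish because $w\eta$ has no term of that weight). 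That the resulting formal series $\theta$ converges on a polydisc — hence that $\Psi=(x,w+\theta)$ is a genuine germ of biholomorphism — follows from a standard majorant estimate exploiting that the substitution $x\mapsto x^{D_N}$ is super-contracting; alternatively one may simply invoke the corresponding step in Favre's procedure~\cite{Fav}, of which \eqref{eqplan2} is the explicit incarnation once the Böttcher coordinates of Lemma~\ref{lemBottcher} have been taken.

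It remains to read off the control on $\wt{h}$. From $\wt{h}(x)=h(x)+\theta\bigl(x^{D_N},h(x)\bigr)$, the fact that the lowest-weight terms of $\theta$ come from $w\eta$ (hence are $O(x^2w,\,xw^2)$) and that $h(x)$ is a unit times $x^{D_N-D_{N-1}}$, one gets $\wt{h}(x)-h(x)=\theta\bigl(x^{D_N},h(x)\bigr)=O\bigl(x^{3D_N-2D_{N-1}}\bigr)$; and since $D_N=d_ND_{N-1}$ with $d_N\geq 2$, one has $3D_N-2D_{N-1}-(D_N-D_{N-1})=2D_N-D_{N-1}\geq D_N+D_{N-1}$, so $\wt{h}(x)-h(x)=h(x)\,O\bigl(x^{D_N+D_{N-1}}\bigr)$, i.e. $\wt{h}(x)=h(x)\bigl(1+O(x^{D_N+D_{N-1}})\bigr)$, as claimed. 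The delicate point in this plan is precisely the convergence of \eqref{eqplan2}: formally the recursion is transparent, but one has to pick the right Banach space of germs and the right estimates so that the gain from iterating the super-contracting substitution $x\mapsto x^{D_N}$ dominates the loss coming from the negative power $x^{2-2D_N}$.
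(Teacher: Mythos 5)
Your proposal is correct and follows essentially the same route as the paper: the ansatz $\Psi(x,w)=(x,w+\theta(x,w))$ with $\theta(x,0)\equiv 0$ is exactly the paper's (and Favre's) $\Psi(x,w)=(x,w(1+\psi(x,w)))$ with $\theta=w\psi$, your functional equation is the unpacked version of Favre's fixed-point operator $T$ with solution $\psi=\sum_k T^k\eta$, and both arguments defer the convergence to Favre (or a standard majorant estimate). The final estimate is obtained identically in both proofs, from $\wt{h}(x)=h(x)\left(1+\psi\left(x^{D_N},h(x)\right)\right)=h(x)+\theta\left(x^{D_N},h(x)\right)$ together with the order of vanishing $\theta=O\left(x^2w,xw^2\right)$ and $D_N\geq 2D_{N-1}$.
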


   \begin{proof} We use the work of Favre~\cite[pp.~491--494]{Fav}. He shows that we can find a map~$\Psi$ of the form $\Psi(x,w)=(x,w(1+\psi(x,w)))$ such that $\Psi\circ F^{(1)}=F^{(2)}\circ\Psi$. Moreover, he gives an explicit expression for~$\psi$ in the form of a series. Define the operator
     \[T\psi(x,w)=\left(1+\eta(x,w)\right)\psi\circ F^{(1)}(x,w)+h(x)\int_0^1\left(1+\eta_1(x,wt)\right)\der{\psi}{w}\circ F^{(1)}(x,wt)dt,\]
     for $\eta_1=\eta+w\der{\eta}{w}$. Then, $\psi=\sum_{k\in\set{N}}T^k\eta$. Obviously, if $f=O\left(x^2,xw\right)$, then $Tf=O\left(x^2\right)$. Actually, the composition with~$F^{(1)}$ makes degree grow much faster. Therefore, $\psi(x,w)$ is a $O\left(x^2,xw\right)$. Now, this gives
     \[\begin{aligned}\wt{h}(x)&=\pi_2\circ F^{(2)}\circ\Psi(x,0)=\pi_2\circ\Psi\circ F^{(1)}(x,0)=h(x)\left(1+\psi\left(x^{D_N},h(x)\right)\right)\\
         \wt{h}(x)&=h(x)\left(1+O\left(x^{D_N+D_{N-1}}\right)\right).\end{aligned}\]
   \end{proof}

   We finish the computation of the normal form with the third step of Favre's procedure~\cite[pp.~496--498]{Fav}.
   \begin{lem}\label{normalformstep3} Keep the notations of Lemma~\ref{normalformstep2} and write $\wt{h}(x)=\sum_{n\in\set{N}^*}b_nx^n$. Then, $F^{(2)}$ is conjugate with the normal form
     \[F^{(3)}(x,y)=\left(x^{D_N},\frac{A}{D_N}yx^{2D_N-2}+g(x)\right),\]
     where
     \[g(x)=\sum\limits_{n=1}^{2D_N-2}b_nx^n+\frac{D_N}{A}b_{2D_N-1}x^{D_N}+\wt{b}_{2D_N}x^{2D_N},\]
     with $\wt{b}_{2D_N}=0$ if $A\neq D_N$ and $\wt{b}_{2D_N}=b_{2D_N}$ if $A=D_N$.
   \end{lem}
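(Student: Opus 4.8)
The plan is to carry out the third step of Favre's normalization procedure \cite[pp.~496--498]{Fav} explicitly, keeping track only of the coefficients that survive. The germ $F^{(2)}(x,y)=\left(x^{D_N},\frac{A}{D_N}yx^{2D_N-2}+\wt{h}(x)\right)$ has its linear part already diagonalized, with multipliers $\lambda_1=0$ along the $x$-axis (this is a rigid germ of the type Favre calls \emph{class 7} or similar, with a contracting direction that gets raised to the power $D_N$) and a ``transverse'' multiplier that we must understand. Favre's third step conjugates $F^{(2)}$ by a map $\Theta(x,y)=\left(x,y+\theta(x)\right)$ that is tangent to the identity, chosen so as to kill as many coefficients of $\wt{h}$ as possible. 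The key is the \emph{resonance} phenomenon: writing $\Theta\circ F^{(2)}=F^{(3)}\circ\Theta$ and comparing second coordinates gives
\[
\frac{A}{D_N}yx^{2D_N-2}+\wt{h}(x)+\theta\left(x^{D_N}\right)=\frac{A}{D_N}\left(y+\theta(x)\right)x^{2D_N-2}+g(x),
\]
so that $g(x)=\wt{h}(x)+\theta\left(x^{D_N}\right)-\frac{A}{D_N}x^{2D_N-2}\theta(x)$. First I would expand $\theta(x)=\sum_{m\geq 1}c_m x^m$ and match coefficients of $x^n$: the coefficient $c_m$ enters $g$ through the term $-\frac{A}{D_N}c_m x^{m+2D_N-2}$ (so it can cancel $b_{m+2D_N-2}$) and through the term $c_m x^{mD_N}$ (which, if $mD_N$ also equals $m'+2D_N-2$ for some smaller index, feeds back).

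The heart of the argument is then a bookkeeping of which $n$ are \emph{reachable}. For $n\geq 2D_N-1$, setting $m=n-2D_N+2\geq 1$, one can choose $c_m$ recursively to annihilate the coefficient of $x^n$ in $g$ — \emph{unless} the feedback term $c_m x^{mD_N}$ lands at or below the same degree and creates an obstruction. The critical index is $n=2D_N-1$, i.e.\ $m=1$: here $c_1$ multiplies $x^{D_N}$ (from $\theta(x^{D_N})$, since $1\cdot D_N = D_N$) and $x^{2D_N-1}$ (from $\frac{A}{D_N}x^{2D_N-2}\theta(x)$). Since $D_N<2D_N-1$ for $D_N\geq 2$, choosing $c_1$ to kill $b_{2D_N-1}$ simultaneously changes the coefficient of $x^{D_N}$: we get $c_1=\frac{D_N}{A}b_{2D_N-1}$ and the new coefficient of $x^{D_N}$ in $g$ becomes $b_{D_N}+c_1=b_{D_N}+\frac{D_N}{A}b_{2D_N-1}$, which is exactly the claimed formula. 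For $1\leq n\leq 2D_N-2$ there is no $c_m$ with $m\geq 1$ that can touch $x^n$ without $mD_N\geq D_N$ interfering further down, and a short induction (going from low degrees upward, and noting the $\theta(x^{D_N})$ feedback only ever raises degree) shows those coefficients $b_1,\dots,b_{2D_N-2}$ are rigid invariants and survive unchanged. For $n\geq 2D_N$ one can keep killing coefficients with $c_{n-2D_N+2}$, again checking the feedback degree $(n-2D_N+2)D_N$ strictly exceeds $n$ when $n\geq 2D_N$ and $D_N\geq 2$ — \emph{except} that when $A=D_N$ there is an extra resonance, making the $x^{2D_N}$-coefficient non-removable; this is the source of the $\wt b_{2D_N}$ term, and is precisely the exceptional case $A=D_N$ in Favre's classification. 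I would isolate this case by noting that when $A=D_N$, the transverse multiplier of the germ $F^{(2)}$ along the fixed curve becomes resonant with a power of the contracting multiplier, so the normal form acquires one additional modulus.

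The main obstacle will be a clean treatment of the feedback term $\theta(x^{D_N})$: one must verify that solving the coefficient equations degree by degree is consistent, i.e.\ that no cyclic dependency arises among the $c_m$. This follows because $\theta(x^{D_N})$ only contributes to degrees that are multiples of $D_N$, and each such degree $mD_N$ with $m\geq 2$ is strictly larger than $mD_N-2D_N+2$ (the degree whose coefficient $c_m$ is primarily solving for), so the recursion is genuinely triangular; the only coincidence is at $m=1$, which produces exactly the single resonant term $\frac{D_N}{A}b_{2D_N-1}x^{D_N}$. Once this triangularity is checked, matching Favre's general normal-form statement \cite[Theorem~in~\S{}on~p.~496]{Fav} to our specific germ gives the stated $g$, and the dichotomy on $A$ versus $D_N$ matches his exceptional resonant case verbatim.
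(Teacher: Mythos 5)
Your proposal is correct in substance, but it takes a more explicit route than the paper, which gives no argument at all for this lemma and simply invokes the third step of Favre's procedure \cite[pp.~496--498]{Fav}. You implement that step concretely: conjugating by the shear $\Theta(x,y)=(x,y+\theta(x))$ gives $g(x)=\wt{h}(x)+\theta\left(x^{D_N}\right)-\frac{A}{D_N}x^{2D_N-2}\theta(x)$, and the coefficient bookkeeping you perform (only $c_1$ reaches degrees $\leq 2D_N-2$, via $\theta(x^{D_N})$ at degree $D_N$; killing $b_{2D_N-1}$ forces $c_1=\frac{D_N}{A}b_{2D_N-1}$ and shifts the $x^{D_N}$-coefficient accordingly; the degree-$2D_N$ equation has diagonal coefficient $1-\frac{A}{D_N}$, whence the dichotomy $A=D_N$ versus $A\neq D_N$) reproduces exactly the stated $g$. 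What your version buys is a self-contained verification that the shear alone suffices and that the exceptional term $\wt{b}_{2D_N}$ arises precisely from the resonance $2\cdot D_N=2+(2D_N-2)$; what the paper's citation buys is that convergence of $\theta$ (which you do not address, though it follows from a routine majorant estimate since the feedback index $(m+2D_N-2)/D_N$ grows much more slowly than $m$) comes for free from Favre's theorem.

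Two small internal inconsistencies are worth fixing. First, your closing claim that for $m\geq2$ the feedback degree $mD_N$ strictly exceeds the target degree $m+2D_N-2$, and that ``the only coincidence is at $m=1$,'' is false at $m=2$, where $2D_N=2+2D_N-2$; this equality is exactly the resonance you correctly analyse earlier as the source of the $A=D_N$ case, so the final paragraph contradicts the body of your argument. Second, the assertion that $b_1,\dots,b_{2D_N-2}$ ``survive unchanged'' is wrong for $n=D_N$: as you yourself compute, that coefficient becomes $b_{D_N}+\frac{D_N}{A}b_{2D_N-1}$ (which is what the lemma's formula for $g$ encodes, and which is harmless later only because the paper shows $b_{D_N}=0$ in the proof of Proposition~\ref{FmapstoF3inj}). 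Neither point invalidates the proof, but both statements should be corrected for the recursion to read as genuinely triangular.
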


   Now, we will show the following.

   \begin{prop}\label{FmapstoF3inj} The map $F\mapsto F^{(3)}$, obtained by combining Lemmas~\ref{lemBottcher},~\ref{normalformstep2} and~\ref{normalformstep3}, is injective.

   \end{prop}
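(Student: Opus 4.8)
The map $F \mapsto F^{(3)}$ is a composition of three conjugations, each described explicitly in Lemmas~\ref{lemBottcher}, \ref{normalformstep2} and \ref{normalformstep3}. The strategy is to recover $F$ from $F^{(3)}$ by reading off the data of $F$ from the coefficients $b_1,\dots,b_{2D_N}$ of $\wt h$ (equivalently from the coefficients of $g$, which differ only by the explicit invertible rescalings in Lemma~\ref{normalformstep3}). The invariant $D_N = \deg F$ is immediately visible as the exponent $x^{D_N}$ in the first coordinate, and $A = \prod a_i$ is recovered from the coefficient $\frac{A}{D_N}x^{2D_N-2}$ of $y$. By Lemma~\ref{Henonlocegaldeg} the tuple of degrees $(d_1,\dots,d_N)$ is also an invariant, so it suffices to show that the monic centered polynomials $P_1,\dots,P_N$ and the Jacobians $a_1,\dots,a_N$ are determined by $\wt h$. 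I would do this by induction on $N$: the idea, announced in the introduction, is that the low-order terms of $\wt h$ depend only on $P_N$ and pin it down, while the remaining polynomials $P_1,\dots,P_{N-1}$ enter only further out in the expansion, where one can peel them off recursively.

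\textbf{Key steps.} First, using $\wt h(x) = h(x)(1+O(x^{D_N+D_{N-1}}))$ (Lemma~\ref{normalformstep2}) and the expression $h(x) = x^{D_N}(\psi_N^{-1}(x))^{-D_{N-1}}(1+O(x^{D_N+D_{N-1}}))$ from \eqref{exprhpsi}, the coefficients $b_{D_N+1},\dots,b_{2D_N-1}$ of $\wt h$ are, up to known normalizations, the first $D_N - 2$ nontrivial Taylor coefficients of $(\psi_N^{-1})^{-D_{N-1}}$, hence of $\psi_N = \varphi_{0,N}\circ\varphi_{0,N-1}^{-1}$. Since $\varphi_{0,i}(t) = t(V_i(t,0))^{-1/D_i}$ and $V_i(t,0) = t^{D_i}Q_i(\tfrac1t,0)$, and the recursion \eqref{recQN} gives $Q_N(z,0) = P_N(Q_{N-1}(z,0)) - a_N Q_{N-2}(z,0)$, one checks that $\psi_N$ is essentially the Böttcher-type coordinate change attached to $P_N$ applied after the one attached to $Q_{N-1}$; its germ determines the monic centered polynomial $P_N$ (this is the one-variable fact that a polynomial is determined by its Böttcher coordinate, which has $d_N - 1$ free coefficients matching the $d_N - 1$ coefficients of a monic centered degree-$d_N$ polynomial — note $D_N - 2 = D_{N-1}d_N - 2 \ge d_N - 1$ since $D_{N-1}\ge 2$ gives enough terms). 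Once $P_N$ is known, the leading coefficient asymptotics of $h$ also recover $a_N$ (it sits in the full product $A$, but the subleading structure of $V_N$ versus $V_{N-1}$ isolates it). Then I would argue that, knowing $P_N$ and $a_N$, one can reconstruct from $\wt h$ the analogous normal-form data for $G := F_{N-1}\circ\dots\circ F_1$ — concretely, $h$ is built from the Böttcher coordinate of $F$, and stripping off the outermost factor $F_N$ (whose contribution is now fully determined) yields the corresponding function for $G$ — and conclude by the induction hypothesis that $P_1,\dots,P_{N-1}$ and $a_1,\dots,a_{N-1}$ are determined. Finally, monic and centered polynomials have no affine ambiguity beyond what is already fixed, so the full map $F$ is recovered.

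\textbf{Main obstacle.} The delicate point is the bookkeeping that shows the polynomials do not ``interfere'': one must verify that in the Taylor expansion of $h$, the data of $P_N, a_N$ is read off from a range of coefficients where the contributions of $P_1,\dots,P_{N-1}$ genuinely vanish (they appear only at order $\ge D_N + D_{N-1}$, inside the $1 + O(x^{D_N + D_{N-1}})$ error term), and then that beyond that order the expansion reorganizes into the corresponding object for $F_{N-1}\circ\dots\circ F_1$ cleanly enough to run the induction. This requires a careful analysis of how \eqref{recQN} propagates through $V_i(t,0)$ and through the Böttcher product $\varphi_w(t) = t\prod_n (V_N(t_n,w_n))^{-1/D_N^{n+1}}$, in particular controlling the orders of vanishing $t_n = O(t^{D_N})$, $w_n = O(t^{D_N - D_{N-1}})$ well enough to confirm the claimed error bounds. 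The one-variable input — that a monic centered polynomial is uniquely determined by the germ of its Böttcher coordinate, with matching numbers of parameters — is classical and I would cite it rather than reprove it. Everything else is a (lengthy but mechanical) triangular unwinding of explicit formulas, so the proof is essentially an organized computation with the stated bounds as the load-bearing estimates.
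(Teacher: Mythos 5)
Your overall architecture coincides with the paper's: recover $\wt h$ up to order $2D_N-1$ from $g$, hence $\psi_N$ up to a relative error $O\left(x^{D_N+D_{N-1}}\right)$, then peel off $F_N,F_{N-1},\dots$ one at a time through the recursive structure of the $\psi_i$. But the load-bearing step --- \emph{why} $P_N$ and $a_N$ can be isolated inside $\psi_N$ --- is wrong as you state it. You claim the contributions of $P_1,\dots,P_{N-1}$ ``appear only at order $\ge D_N+D_{N-1}$, inside the error term.'' They do not, and they cannot: all the data of $F$ must be encoded in the coefficients of $\wt h$ below order $2D_N$, or no induction could ever recover $P_1,\dots,P_{N-1}$ from the normal form. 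Concretely, Lemma~\ref{lempsii} gives $\psi_N(x)^{D_N}=x^{D_N}\bigl(U_N\left(x^{D_{N-1}}\right)-a_Nx^{D_N}\left(\psi_{N-1}^{-1}(x)\right)^{-D_{N-2}}\bigr)^{-1}$, and the second term in the denominator starts at order $D_N-D_{N-2}<D_N+D_{N-1}$ and carries $a_N$ together with all of $\psi_{N-1}$, hence $P_{N-1},a_{N-1},\dots$. For the same reason $\psi_N$ is \emph{not} the B\"ottcher coordinate of $P_N$ (the term $-a_NQ_{N-2}$ in~\eqref{recQN} perturbs it at low order), so the one-variable fact you cite does not apply. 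The actual separation, which is what Lemmas~\ref{lempsii} and~\ref{lempsii2} establish, is arithmetic rather than by order of magnitude: because the $P_i$ are monic and centered, $U_N\left(x^{D_{N-1}}\right)$ contributes only at exponents that are multiples of $D_{N-1}$, while the expansion of $a_Nx^{D_N}\left(\psi_{N-1}^{-1}(x)\right)^{-D_{N-2}}$ has no such multiples in the relevant range; this lets one read off $U_N$, then $a_N$ (using that $\psi_{N-1}$ is tangent to the identity), and then $\psi_{N-1}$ up to $O\left(x^{D_{N-1}+D_{N-2}}\right)$, which restarts the induction. Your sketch names the bookkeeping as the obstacle but supplies the wrong resolving mechanism, so as written the argument does not go through.

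Two secondary gaps. Passing from $g$ back to $\wt h$ is not a pure rescaling: Lemma~\ref{normalformstep3} folds $b_{2D_N-1}$ into the coefficient of $x^{D_N}$, where it collides with $b_{D_N}$, so one needs the extra fact that $b_{D_N}=0$ (which the paper extracts from Lemma~\ref{lempsii2}) before the coefficients of $g$ determine $b_1,\dots,b_{2D_N-1}$. And the last Jacobian $a_1$ is not produced by the peeling at all; it is recovered at the end from $A=\lambda D_N$, a point on which your proposal is silent.
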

   
   That is, if two generalized H\'enon maps are given by monic and reduced polynomials as in the beginning of the section, and if the process of the lemmas gives the same normal form, then they are equal. Note that the statement is \emph{not} up to conjugacy (neither for~$F$ nor for~$F^{(3)}$). That's why we have to specify from where we obtain our normal form. For clarity's sake, we begin the proof by some lemmas, where we still work on one map~$F$.

   \begin{lem}\label{lempsii} Recall the maps~$\psi_i$ from~\eqref{defphiipsi} and denote by $U_i(t)=t^{d_i}P_i\left(\frac{1}{t}\right)$. Then,
     \[\psi_i(x)^{D_i}=\frac{x^{D_i}}{U_i\left(x^{D_{i-1}}\right)-a_ix^{D_i}\left(\psi_{i-1}^{-1}(x)\right)^{-D_{i-2}}},\quad i\in\intent{2}{N}.\]
   \end{lem}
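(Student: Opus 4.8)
The plan is to unwind the definitions in~\eqref{defphiipsi} and reduce the claim to the recursion~\eqref{recQN}. Fix $i\in\intent{2}{N}$ and put $t=\varphi_{0,i-1}^{-1}(x)$. Since $\varphi_{0,i}(s)=s\left(V_i(s,0)\right)^{-1/D_i}$ for every~$s$, one has at once
\[\psi_i(x)^{D_i}=\varphi_{0,i}(t)^{D_i}=\frac{t^{D_i}}{V_i(t,0)},\]
so the entire task is to compute $V_i(t,0)=t^{D_i}Q_i(1/t,0)$ as a function of~$x$. Throughout I would take the principal branch of every root; this is harmless because each $V_j(\cdot,0)=1+O(t)$ near $t=0$, so there is no monodromy.

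First I would apply~\eqref{recQN} at $w=0$, namely $Q_i(z,0)=P_i\!\left(Q_{i-1}(z,0)\right)-a_iQ_{i-2}(z,0)$, to get
\[V_i(t,0)=t^{D_i}\left[P_i\!\left(Q_{i-1}(1/t,0)\right)-a_iQ_{i-2}(1/t,0)\right].\]
Then I would identify the two arguments. Since $Q_{i-1}(1/t,0)=V_{i-1}(t,0)/t^{D_{i-1}}$ and the choice of~$t$ gives $x=t\left(V_{i-1}(t,0)\right)^{-1/D_{i-1}}$, hence $V_{i-1}(t,0)=(t/x)^{D_{i-1}}$, one finds $Q_{i-1}(1/t,0)=x^{-D_{i-1}}$. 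Likewise, from $\psi_{i-1}^{-1}=\varphi_{0,i-2}\circ\varphi_{0,i-1}^{-1}$ one gets $\varphi_{0,i-2}(t)=\psi_{i-1}^{-1}(x)$, whence $V_{i-2}(t,0)=\left(t/\psi_{i-1}^{-1}(x)\right)^{D_{i-2}}$ and $Q_{i-2}(1/t,0)=\left(\psi_{i-1}^{-1}(x)\right)^{-D_{i-2}}$. (For $i=2$ this reads $\varphi_{0,0}=\id$, i.e.\ $V_0\equiv1$ and $\psi_1^{-1}(x)=t$, which is consistent with $Q_0(1/t,0)=1/t$.)

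To finish, I would rewrite $P_i\!\left(x^{-D_{i-1}}\right)=x^{-D_i}U_i\!\left(x^{D_{i-1}}\right)$ using $U_i(t)=t^{d_i}P_i(1/t)$ and $D_i=d_iD_{i-1}$, substitute the three identities into the formula for $V_i(t,0)$, and divide $t^{D_i}$ by the result; after clearing the factor $x^{-D_i}$ this is exactly the asserted expression for $\psi_i(x)^{D_i}$. I do not expect any genuine obstacle here: the computation is entirely mechanical, and the only points needing care — which I would spell out — are the consistent choice of branches and the degenerate base term $V_0\equiv1$.
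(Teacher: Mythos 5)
Your proposal is correct and follows essentially the same route as the paper's proof: unwind $\psi_i$ via the $\varphi_{0,j}$, apply the recursion~\eqref{recQN} at $w=0$, recognize $Q_{i-1}(1/t,0)=x^{-D_{i-1}}$ and $Q_{i-2}(1/t,0)=\left(\psi_{i-1}^{-1}(x)\right)^{-D_{i-2}}$, and convert $P_i$ into $U_i$. Your explicit treatment of branches and of the degenerate case $V_0\equiv1$ for $i=2$ is a welcome bit of extra care, but the argument is otherwise identical.
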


   \begin{proof} This comes from the induction relation between the $V_i$. Note that
     \[\psi_i(x)^{D_i}=\frac{\left(\varphi_{0,i-1}^{-1}(x)\right)^{D_i}}{V_i\left(\varphi_{0,i-1}^{-1}(x),0\right)}=\frac{1}{Q_i\left(\frac{1}{\varphi_{0,i-1}^{-1}(x)},0\right)},\]
     by the definition of the $V_i$. Now, using~\eqref{recQN},
     \[\psi_i(x)^{D_i}=\frac{1}{P_i\left(Q_{i-1}\left(\frac{1}{\varphi_{0,i-1}^{-1}(x)},0\right)\right)-a_iQ_{i-2}\left(\frac{1}{\varphi_{0,i-1}^{-1}(x)},0\right)}.\]
     Coming back from~$Q$ to~$V$ , we get
     \[\psi_i(x)^{D_i}=\frac{1}{P_i\left(\left(\varphi_{0,i-1}^{-1}(x)\right)^{-D_{i-1}}V_{i-1}\left(\varphi_{0,i-1}^{-1}(x),0\right)\right)-a_i\left(\varphi_{0,i-1}^{-1}(x)\right)^{-D_{i-2}}V_{i-2}\left(\varphi_{0,i-1}^{-1}(x),0\right)}.\]
     Now, in the denominator we recognize $\varphi_{0,i-1}^{-D_{i-1}}$ and $\varphi_{0,i-2}^{-D_{i-2}}$. Therefore,
     \[\psi_i(x)^{D_i}=\frac{1}{P_i\left(\frac{1}{x^{D_{i-1}}}\right)-a_i\left(\psi_{i-1}^{-1}(x)\right)^{-D_{i-2}}}=\frac{x^{D_i}}{U_i\left(x^{D_{i-1}}\right)-a_ix^{D_i}\left(\psi_{i-1}^{-1}(x)\right)^{-D_{i-2}}}.\qedhere\]
   \end{proof}

   \begin{lem}\label{lempsii2} We have the following approximation of the~$\psi_i$.
     \[\psi_i(x)=x\left(1+\frac{a_i}{D_i}x^{D_i-D_{i-2}}+O\left(x^{2D_{i-1}}\right)\right),\quad i\in\intent{2}{N},\]
       \[\psi_1(x)=x\left(1+O\left(x^2\right)\right).\]
     \end{lem}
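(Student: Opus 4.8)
The plan is to establish both displayed formulas simultaneously by induction on $i$, feeding the closed expression for $\psi_i(x)^{D_i}$ of Lemma~\ref{lempsii} into itself and controlling the orders of the remainders by means of the identities $D_j = d_j D_{j-1}$ with $d_j \geq 2$.

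\emph{Base case $i = 1$.} With the convention $\varphi_{0,0} = \id$ (consistent with $Q_0 = z$, $V_0 \equiv 1$) one has $\psi_1 = \varphi_{0,1}$. Since $P_1$ is monic and centered, $V_1(t,0) = U_1(t) = t^{d_1}P_1(1/t) = 1 + O(t^2)$, hence $\psi_1(x) = x\,(V_1(x,0))^{-1/D_1} = x(1 + O(x^2))$, which is the claim (note $2 = 2D_0$). I record for the inductive step that the hypothesis at any level $i-1$ implies the cruder estimate $\psi_{i-1}(x) = x\bigl(1 + O(x^{D_{i-2}})\bigr)$: indeed $D_{i-1} - D_{i-3} \geq D_{i-2}$ because $D_{i-1} - D_{i-2} = (d_{i-1}-1)D_{i-2} \geq D_{i-2} > D_{i-3}$, and $2D_{i-2} \geq D_{i-2}$; for $i-1 = 1$ this is just $O(x^2) \subseteq O(x^{D_0})$ since $D_0 = 1$.

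\emph{Inductive step $2 \leq i \leq N$.} Starting from the cruder estimate and inverting, $\psi_{i-1}^{-1}(x) = x(1 + O(x^{D_{i-2}}))$, so $(\psi_{i-1}^{-1}(x))^{-D_{i-2}} = x^{-D_{i-2}}(1 + O(x^{D_{i-2}}))$ and therefore $a_i x^{D_i}(\psi_{i-1}^{-1}(x))^{-D_{i-2}} = a_i x^{D_i - D_{i-2}} + O(x^{D_i})$. Since $P_i$ is monic and centered, $U_i(x^{D_{i-1}}) = 1 + O(x^{2D_{i-1}})$, and as $D_i = d_i D_{i-1} \geq 2D_{i-1}$ the denominator appearing in Lemma~\ref{lempsii} equals $1 - a_i x^{D_i - D_{i-2}} + O(x^{2D_{i-1}})$. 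Inverting this, and using $D_i - D_{i-2} \geq D_{i-1}$ (equivalently $(d_i-1)D_{i-1} \geq D_{i-1} > D_{i-2}$) to absorb the square of the lowest-order term, gives
\[\left(\frac{\psi_i(x)}{x}\right)^{D_i} = 1 + a_i x^{D_i - D_{i-2}} + O\!\left(x^{2D_{i-1}}\right).\]
Taking the principal $D_i$-th root, and absorbing once more the square of $a_i x^{D_i - D_{i-2}}$ (which is $O(x^{2D_{i-1}})$), yields $\psi_i(x) = x\bigl(1 + \tfrac{a_i}{D_i}x^{D_i - D_{i-2}} + O(x^{2D_{i-1}})\bigr)$, closing the induction. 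When $d_i \geq 3$ the exponent $D_i - D_{i-2}$ already exceeds $2D_{i-1}$, so the explicit term is then absorbed into the remainder and the statement simply reads $\psi_i(x) = x(1 + O(x^{2D_{i-1}}))$; the computation above covers this case unchanged.

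The only genuine work is this degree bookkeeping: each time a remainder is produced — by an inversion, by the binomial expansion of a power or of a root, or by $U_i$ — one must check that its order is at least $2D_{i-1}$, and every such check reduces to the two inequalities $D_j \geq 2D_{j-1}$ and $D_{j-1} > D_{j-2}$. There is no analytic subtlety, since each $\psi_i$ is a germ of biholomorphism tangent to the identity at $0$ (as $V_i(0,0) = 1$), so its inverse and the principal roots used above are unambiguous holomorphic germs.
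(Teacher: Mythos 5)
Your proof is correct and follows essentially the same route as the paper: induction on $i$ via the identity of Lemma~\ref{lempsii}, using that $P_i$ monic and centered gives $U_i(x^{D_{i-1}})=1+O\left(x^{2D_{i-1}}\right)$, plus degree bookkeeping with $D_j=d_jD_{j-1}$. The only (harmless) difference is that you feed in only the cruder consequence $\psi_{i-1}(x)=x\left(1+O\left(x^{D_{i-2}}\right)\right)$ of the inductive hypothesis, so your error control rests on $D_i\geq 2D_{i-1}$, whereas the paper tracks the explicit first correction of $\psi_{i-1}$ and invokes $D_i+D_{i-1}-D_{i-2}-D_{i-3}\geq 2D_{i-1}$; both yield the same remainder $O\left(x^{2D_{i-1}}\right)$.
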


     In~$\psi_i$, the term $a_ix^{D_i-D_{i-2}}$ is of course also a $O\left(x^{2D_{i-1}}\right)$ if $d_i\geq3$.

     \begin{proof} Since $\psi_1(x)=x\left(V_1(x,0)\right)^{-1/d_1}=x\left(U_1(x)\right)^{-1/d_1}$, the expansion of $\psi_1$ is just a consequence of~$P_1$ being monic and centered. Let $i$ be greater or equal to~$2$ and suppose that the lemma is proved up to rank $i-1$. By Lemma~\ref{lempsii}, since $\psi_i$ is tangent to the identity,
       \[\psi_i(x)=x\left(U_i\left(x^{D_{i-1}}\right)-a_ix^{D_i}\left(\psi_{i-1}^{-1}(x)\right)^{-D_{i-2}}\right)^{-1/D_i}=x\left(1+\frac{a_i}{D_i}x^{D_i-D_{i-2}}+O\left(x^{2D_{i-1}}\right)\right).\]
       Above, we used that $P_i$ is centered and monic, so that $U_i(x)=1+O\left(x^2\right)$ and
       \[D_i+D_{i-1}-D_{i-2}-D_{i-3}\geq2D_{i-1}.\]
       We conclude by induction.
     \end{proof}

     \begin{proof}[Proof of Proposition~\ref{FmapstoF3inj}] Let~$F$ and~$G$ be two generalized H\'enon maps, defined by compositions of monic and centered standard H\'enon maps
       \[F=F_N\circ\dots\circ F_1,\qquad G=G_N\circ\dots\circ G_1;\]
       \[F_i(z,w)=\left(P_i^{(1)}(z)-a_i^{(1)}w,z\right),\quad G_i(z,w)=\left(P_i^{(2)}(z)-a_i^{(2)}w,z\right),\]
       yielding the same normal form $(x,y)\mapsto\left(x^{D_N},\lambda yx^{2D_N-2}+g(x)\right)$ by Lemmas~\ref{lemBottcher},~\ref{normalformstep2} and~\ref{normalformstep3}. Note that Lemma~\ref{Henonlocegaldeg} implies that indeed $F$ and $G$ are the composition of the same amount of standard H\'enon maps, and that $\deg(F_i)=\deg(G_i)$. Denote by $\varphi_{0,i}^{(j)}$, $\psi_i^{(j)}$, for $j=1$ (respectively~$j=2$) the maps defined in~\eqref{defphiipsi} for~$F$ (respectively for~$G$). Denote by $h^{(1)},\wt{h}^{(1)}$ (respectively $h^{(2)},\wt{h}^{(2)}$) the maps obtained in Lemmas~\ref{lemBottcher} and~\ref{normalformstep2} for~$F$ (respectively~$G$). If $\wt{h}^{(j)}(z)=\sum_{n\in\set{N}}b_n^{(j)}z^n$, for $j\in\{1,2\}$, we have by Lemma~\ref{normalformstep3}
       \[\begin{aligned}g(x)&=\sum\limits_{n=1}^{2D_N-2}b_n^{(1)}x^n+\frac{D_N}{A}b_{2D_N-1}^{(1)}x^{D_N}+\wt{b}^{(1)}_{2D_N}x^{2D_N}\\
                            &=\sum\limits_{n=1}^{2D_N-2}b_n^{(2)}x^n+\frac{D_N}{A}b_{2D_N-1}^{(2)}x^{D_N}+\wt{b}^{(2)}_{2D_N}x^{2D_N}.\end{aligned}\]
  Now, by~\eqref{exprhpsi} and Lemma~\ref{normalformstep2}, $\wt{h}^{(j)}(x)=x^{D_N}\left(\left(\psi_N^{(j)}\right)^{-1}(x)\right)^{-D_{N-1}}+O\left(x^{2D_N}\right)$. Using Lemma~\ref{lempsii2}, we infer that $b_{D_N}^{(j)}=0$. Therefore, if
  \[g(x)=\sum_{n=1}^{2D_N-2}c_nx^n+\wt{c}_{2D_N}x^{2D_N},\]
  $b_n^{(1)}=c_n=b_n^{(2)}$, for $n\in\intent{1}{2D_N-2}$, $n\neq D_N$, and $b_{2D_N-1}^{(1)}=\lambda c_{D_N}=b_{2D_N-1}^{(2)}$. In other words, $\wt{h}^{(1)}(x)=\wt{h}^{(2)}(x)+O\left(x^{2D_N}\right)$.

  Next, recall that $\wt{h}^{(j)}(x)=x^{D_N}\left(\left(\psi_N^{(j)}\right)^{-1}(x)\right)^{-D_{N-1}}+O\left(x^{2D_N}\right)$. It follows that we have $\psi_N^{(1)}(x)=\psi_N^{(2)}(x)\left(1+O\left(x^{D_N+D_{N-1}}\right)\right)$. We will show that if
  \begin{equation}\label{HRpsiiPiai}\psi_i^{(1)}(x)=\psi_i^{(2)}(x)\left(1+O\left(x^{D_i+D_{i-1}}\right)\right),\end{equation}
    for $i\geq2$, then $P_i^{(1)}=P_i^{(2)}$, $a_i^{(1)}=a_i^{(2)}$ and $\psi_{i-1}^{(1)}(x)=\psi_{i-1}^{(2)}(x)\left(1+O\left(x^{D_{i-1}+D_{i-2}}\right)\right)$. This is a consequence of Lemmas~\ref{lempsii} and~\ref{lempsii2}. Indeed, if~\eqref{HRpsiiPiai} holds for some~$i\geq2$, by Lemma~\ref{lempsii},
    \[\frac{x^{D_i}}{\left(\psi_i^{(j)}(x)\right)^{D_i}}=U_i^{(j)}\left(x^{D_{i-1}}\right)-a_i^{(j)}x^{D_i}\left(\left(\psi_{i-1}^{(j)}\right)^{-1}(x)\right)^{-D_{i-2}}.\]
    Therefore,
    \[U_i^{(1)}\left(x^{D_{i-1}}\right)-a_i^{(1)}x^{D_i}\left(\left(\psi_{i-1}^{(1)}\right)^{-1}(x)\right)^{-D_{i-2}}=U_i^{(2)}\left(x^{D_{i-1}}\right)-a_i^{(2)}x^{D_i}\left(\left(\psi_{i-1}^{(2)}\right)^{-1}(x)\right)^{-D_{i-2}},\]
    up to order $D_i+D_{i-1}-1$. Since the $a_i^{(j)}x^{D_i}\left(\left(\psi_{i-1}^{(j)}\right)^{-1}(x)\right)^{-D_{i-2}}$ have no terms multiples of $D_{i-1}$ up to $D_i$ in their expansion by Lemma~\ref{lempsii2}, we obtain $U_i^{(1)}=U_i^{(2)}$. Moreover, $\psi_{i-1}^{(j)}$ are tangent to the identity. Hence, $a_i^{(1)}=a_i^{(2)}$ and $\psi_{i-1}^{(1)}(x)=\psi_{i-1}^{(2)}(x)\left(1+O\left(x^{D_{i-1}+D_{i-2}}\right)\right)$.

    Using the above argument by induction, we obtain $P_i^{(1)}=P_i^{(2)}$, $a_i^{(1)}=a_i^{(2)}$, for each $i\in\intent{2}{N}$, and $\psi_1^{(1)}(x)=\psi_1^{(2)}(x)\left(1+O\left(x^{d_1+1}\right)\right)$. Since $\psi_1^{(j)}(x)=x\left(U_1^{(j)}(x)\right)^{-1/d_1}$, we also get $P_1^{(1)}=P_1^{(2)}$. Finally, we have $A=\prod_{i=1}^N a_i^{(1)}=\prod_{i=1}^Na_i^{(2)}$ because $\lambda=\frac{A}{D_N}$, so that $a_1^{(1)}=a_1^{(2)}$. Hence, $F=G$ and the proposition is proved.
                        
     \end{proof}

     \begin{proof}[End of proof of Theorem~\ref{Henonlocaff}] Again, we use the work of Favre. He shows~\cite[p.~482]{Fav} that two normal forms like in Lemma~\ref{normalformstep3} are conjugate if and only if $g_2(x)=bg_1(\zeta x)$, for some $b\in\set{C}^*$ and $\zeta^{D_N-1}=1$. Identifying the coefficients in $x^{D_N-D_{N-1}}$, we get $b=\zeta^{D_{N-1}-1}$ if both come from the process of the Lemmas, starting with generalized H\'enon maps. It is quite easy to see that this is equivalent to $h_2(x)=\zeta^{D_{N-1}-1}h_1(\zeta x)+O\left(x^{2D_N}\right)$. By Proposition~\ref{FmapstoF3inj}, it is enough to find one generalized H\'enon map~$H$, affinely conjugate to~$F$, that can be written as a composition of monic and centered H\'enon maps, and that yields the normal form with $g_2$. Since~$F$ and~$H$ here will be compositions of standard H\'enon maps with monic and centered polynomials, the conjugacy will even be linear with a very particular shape, but this comes from our first reduction on~$F$ and our choice of~$H$. Consider
       \[H=\theta\circ F\circ\theta^{-1},\quad\theta\colon(z,w)\mapsto\left(\zeta z,\zeta^{D_{N-1}}w\right).\]
       That is,
       \[\begin{aligned}H(z,w)&=\left(\zeta Q_N\left(\zeta^{-1} z,\zeta^{-D_{N-1}}w\right),\zeta^{D_{N-1}}Q_{N-1}\left(\zeta^{-1} z,\zeta^{-D_{N-1}}w\right)\right)\\
           H(z,w)&=\left(R_N(z,w),R_{N-1}(z,w)\right).\end{aligned}\]
       Note that
       \[H=\theta_N\circ F_N\circ\theta_{N-1}^{-1}\circ\theta_{N-1}\circ F_{N-1}\circ\theta_{N-2}^{-1}\circ\dots\circ\theta_1\circ F_1\circ\theta_0^{-1},\]
       with $\theta_i(z,w)=\left(\zeta^{D_i}z,\zeta^{D_{i-1}}w\right)$, and $D_{-1}=D_{N-1}$. In this form, we have
       \[\theta_i\circ F_i\circ\theta_{i-1}^{-1}(z,w)=\left(\zeta^{D_i}P_i\left(\zeta^{-D_{i-1}}z\right)-a_i\zeta^{D_i-D_{i-2}}w,z\right),\]
       which is a standard H\'enon map with monic and centered polynomial. Moreover, if we denote by $W_i(t,w)=t^{D_i}R_i\left(\frac{1}{t},\frac{w}{t}\right)$, it satisfies for $i=N$
       \[W_N(t,0)=t^{D_N}R_N\left(\frac{1}{t},0\right)=t^{D_N}\zeta Q_N\left(\frac{1}{\zeta t},0\right)=V_N\left(\zeta t,0\right).\]
       Similarly, $W_{N-1}(t,0)=V_{N-1}\left(\zeta t,0\right)$. If we denote by $\varphi_{0,i}^{(2)}$, $\psi_i^{(2)}$ the maps defined by~\eqref{defphiipsi} with respect to the $W_i$, we obtain $\varphi_{0,i}^{(2)}(t)=\zeta^{-1}\varphi_{0,i}\left(\zeta t\right)$, for $i\in\{N-1,N\}$. Therefore, $\psi_N^{(2)}(t)=\zeta^{-1}\psi_N\left(\zeta t\right)$. Using~\eqref{exprhpsi}, the map $h^{(2)}$ defined by Lemma~\ref{lemBottcher} applied to~$H$ can be written as
       \[h^{(2)}(x)=x^{D_N}\zeta^{D_{N-1}}\psi_N^{-1}\left(\zeta x\right)^{-D_{N-1}}+O\left(x^{2D_N}\right)=bh_1(\zeta x)+O\left(x^{2D_N}\right)=h_2(x)+O\left(x^{2D_N}\right).\]
       With our first remarks, this concludes the proof.
     \end{proof}

     \section{Surjectivity}\label{secsurj}

     In this section, we compute the image of the map $F\mapsto S_F$ onto Kato surfaces of a given self-intersection profile, with the notations before Theorem~\ref{critKato}. Let us first recall results and notions of~\cite{OelToma} that we will use. Let~$\varphi$ be a Dloussky germ of the form
     \[\varphi(x,y)=\left(x^p,\lambda yx^q+P(x)+cx^{pq/(p-1)}\right),\]
     with~$\lambda\in\set{C}^*$, $p,q\geq2$, $c=0$ if $\lambda\neq1$ or $\frac{q}{p-1}\notin\set{N}$ and $P(x)=x^j+\sum_{l=j+1}^qc_lx^l$, for some $j\in\intent{1}{q}$. The tuple of integers $(p,q,j)$ is called the tuple of \emph{invariants} of~$\varphi$. Define also~\cite[Definition~4.7]{OelToma} the sequences of integers
     \begin{equation}\label{deftype}\begin{aligned}m_1&=j,\quad i_1=\gcd(m_1,p),\\
         m_{\alpha}&=\min\left\{m>m_{\alpha-1}~;~c_m\neq0~\text{and}~\gcd\left(m,i_{\alpha-1}\right)<i_{\alpha-1}\right\},~2\leq\alpha\leq t,\\
         i_{\alpha}&=\gcd\left(i_{\alpha-1},m_{\alpha}\right),~2\leq\alpha\leq t,\\
         i_t&=\gcd\left(m_t,i_{t-1}\right)=\gcd\left(m_1,\dots,m_t,p\right)=1.\end{aligned}\end{equation}
     That is,~$t$ is defined to be the integer when the process gives $i_t=1$. This process indeed ends as such, since~$\varphi$ is a Dloussky germ and by a result of Favre~\cite{Fav}. The sequence $(m_1,\dots,m_t)$ is called the \emph{type} of~$\varphi$. If~$t=1$,~$\varphi$ is called of \emph{simple type}.

     All these integers can in fact be recovered with only the self-intersection profile (in the language of~\cite{OelToma}, the \emph{Dloussky sequence}) of the Kato surface~$S$ induced by~$\varphi$. In fact, according to the Dloussky sequence, one decomposes~$\varphi$ into germs of simple type, then one finds its invariants and type by a reverse Euclid algorithm~\cite[Section~6]{OelToma}, and finally one finds the invariants and type of~$\varphi$ with induction formulas~\cite[Proposition~5.10]{OelToma}.

     Denote by $s_{\ell}=(\ell+2,2,\dots,2)$, with~$2$ repeated $\ell-1$ times and $r_{\ell}=(2,\dots,2)$, with~$2$ repeated~$\ell$ times, both for~$\ell\geq1$. These are called respectively \emph{singular} and \emph{regular} Dloussky sequences and are the elementary bricks for general Dloussky sequences. Note that the index~$\ell$ is the length of the sequences~$s_{\ell}$ and~$r_{\ell}$. Now, we will recover the type of a germ inducing a Kato surface of self-intersection profile~\eqref{DlSHenongen} by applying the process above. We will leave some of the details to the reader. Consider a Kato surface~$S$ of self-intersection profile
     \begin{equation}\label{DlSHenongen2}[s_1,s_{d_1-2},r_{d_1},s_1,s_{d_2-2},r_{d_2},\dots,s_1,s_{d_N-2},r_{d_N}],\end{equation}
     that is of the same (cyclic) profile as in~\eqref{DlSHenongen}. Notice that if for some~$i\in\intent{1}{N}$,~$d_i=2$, the sequence~$s_{d_i-2}$ is empty, but this doesn't change the computation. The decomposition~\cite[p.~335]{OelToma} of a germ~$\varphi$ inducing~$S$ into germs of simple type is then of the form $\varphi=\varphi_N\circ\dots\circ\varphi_1$, with~$\varphi_i$ of Dloussky sequence
     \[[s_1,s_{d_i-2},r_{d_i}],\qquad i\in\intent{1}{N}.\]
     The tuple of invariants of~$\varphi_i$ is then easily computable (see~\cite[pp.~335--337]{OelToma}) to be $(d_i,2d_i-2,d_i-1)$, and its type is then~$(d_i-1)$. Now, obvious inductions using formulas in~\cite[p.~333]{OelToma} give that a Kato surface~$S$ of self-intersection profile~\eqref{DlSHenongen2} is induced by a germ~$\varphi$ of tuple of invariants and type
     \[(p,q,j)=(D_N,2D_N-2,D_N-D_{N-1}),\qquad(m_1,\dots,m_N)=\left(2D_N-D_{N-i+1}-D_{N-i}\right)_{i\in\intent{1}{N}},\]
     with the same notations as in Section~\ref{secnormalform} for the~$D_i$, $i\in\intent{1}{N}$. Coming back to the definition~\eqref{deftype}, we obtain the following.
     \begin{lem}\label{typeKatoHenon} Let $d_1,\dots,d_N\geq2$ be integers, and denote by $D_i=\prod_{j=1}^id_j$, for $i\in\intent{0}{N}$. Let~$S$ be a Kato surface of self-intersection profile
       \[[s_1,s_{d_1-2},r_{d_1},s_1,s_{d_2-2},r_{d_2},\dots,s_1,s_{d_N-2},r_{d_N}].\]
       Then, there exists a Dloussky germ $\varphi\colon\germ{\set{C}^2}{0}\to\germ{\set{C}^2}{0}$, inducing~$S$, of the form
       \begin{equation}\label{formtypeHenon}\begin{aligned}\varphi(x,y)=&\left(x^{D_N},\lambda yx^{2D_N-2}+P(x)+cx^{2D_N}\right),\\
           P(x)=&\sum\limits_{i=1}^Nx^{2D_N-D_{N-i+1}-D_{N-i}}\sum\limits_{l=0}^{d_{N-i+1}-1}\alpha_l^{(i)}x^{lD_{N-i}},\end{aligned}\end{equation}
       with $\lambda\in\set{C}^*$, $\alpha_0^{(i)}\in\set{C}^*$, $i\in\intent{1}{N}$, $\alpha_0^{(1)}=1$ and $c=0$ if $\lambda\neq1$.
       \end{lem}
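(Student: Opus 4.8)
The plan is to unwind Favre's definition~\eqref{deftype} for a germ inducing $S$ whose invariants and type are the ones identified in the discussion above, and to read off the shape of its polynomial part. Concretely, by that discussion $S$ is induced by a germ $\varphi$ in the normal form recalled at the beginning of the section, with invariants $(p,q,j)=(D_N,\,2D_N-2,\,D_N-D_{N-1})$ and type $(m_1,\dots,m_N)$, where $m_i=2D_N-D_{N-i+1}-D_{N-i}$; write
\[\varphi(x,y)=\left(x^{D_N},\ \lambda yx^{2D_N-2}+P(x)+cx^{pq/(p-1)}\right),\qquad P(x)=x^{j}+\sum_{l=j+1}^{2D_N-2}c_lx^l.\]
Since $q/(p-1)=(2D_N-2)/(D_N-1)=2\in\set{N}$, the exponent $pq/(p-1)$ equals $2D_N$ and the only constraint carried by $c$ is $c=0$ when $\lambda\neq1$; so it only remains to show that $P$ has the form displayed in~\eqref{formtypeHenon}.

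First I would compute the auxiliary integers of~\eqref{deftype}. Putting $i_0:=p=D_N$, I claim that $i_\alpha=D_{N-\alpha}$ for all $\alpha\in\intent{0}{N}$. This follows by induction on $\alpha$: since $D_{N-\alpha+1}$ divides $D_N$ (because $1\le\alpha\le N$) and divides $D_{N-\alpha+1}$, one has
\[i_\alpha=\gcd\left(i_{\alpha-1},m_\alpha\right)=\gcd\left(D_{N-\alpha+1},\ 2D_N-D_{N-\alpha+1}-D_{N-\alpha}\right)=\gcd\left(D_{N-\alpha+1},D_{N-\alpha}\right)=D_{N-\alpha},\]
the last step because $D_{N-\alpha}\mid D_{N-\alpha+1}$. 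In particular $i_N=D_0=1$, so the process terminates after exactly $N$ steps, as it should.

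Next I would use the minimality in~\eqref{deftype} to locate the nonzero coefficients of $P$. The polynomial $P$ is supported in $\intent{m_1}{q}$ with $q=2D_N-2$, and the $m_i$ are strictly increasing with $m_N<q$. Fix $i\in\intent{1}{N-1}$ and an exponent $r$ with $m_i<r<m_{i+1}$; by minimality of $m_{i+1}$ among the candidates built from $i_i=D_{N-i}$, if $c_r\neq0$ then $\gcd(r,D_{N-i})=D_{N-i}$, i.e. $D_{N-i}\mid r$. Now $m_i=2D_N-D_{N-i+1}-D_{N-i}$ is itself a multiple of $D_{N-i}$, whereas $m_{i+1}\equiv-D_{N-i-1}\pmod{D_{N-i}}$ with $0<D_{N-i-1}<D_{N-i}$, so the largest multiple of $D_{N-i}$ strictly below $m_{i+1}$ is $m_{i+1}-(D_{N-i}-D_{N-i-1})=2D_N-2D_{N-i}$; moreover $(2D_N-2D_{N-i}-m_i)/D_{N-i}=(D_{N-i+1}-D_{N-i})/D_{N-i}=d_{N-i+1}-1$. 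Hence every exponent in $\intent{m_i}{m_{i+1}-1}$ carrying a nonzero coefficient of $P$ is of the form $m_i+lD_{N-i}$ with $0\le l\le d_{N-i+1}-1$, which is precisely the $i$-th block of~\eqref{formtypeHenon} (and the ``gap'' $2D_N-2D_{N-i}<r<m_{i+1}$ contains no multiple of $D_{N-i}$, so nothing is lost there). For $i=N$ the process has stopped, and the leftover range $\intent{m_N}{q}$ equals $\{m_N+l\,D_0:0\le l\le d_1-1\}$ because $m_N=2D_N-d_1-1$ and $D_0=1$, giving the last block. Finally $c_{m_i}\neq0$ for every $i$ by the very definition of $m_i$, and $c_{m_1}=c_j=1$ from the normal form; so, setting $\alpha_l^{(i)}:=c_{m_i+lD_{N-i}}$, we obtain $\alpha_0^{(i)}\in\set{C}^*$, $\alpha_0^{(1)}=1$, and summing the blocks reproduces~\eqref{formtypeHenon}.

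I expect the only genuine work to be the arithmetic bookkeeping with the products $D_i$: that the intervals $\intent{m_i}{m_{i+1}-1}$ are nonempty and pairwise disjoint, that each ``gap'' between a block and the next $m_{i+1}$ contains no multiple of the relevant $D_{N-i}$ (so that no admissible coefficient is missed), and that the $N$ blocks together exhaust $\intent{m_1}{q}$. All of this is elementary, but care is needed with the index shift between $\alpha$ in~\eqref{deftype} and $i$ in~\eqref{formtypeHenon}.
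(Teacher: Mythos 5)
Your proof is correct and takes essentially the same route as the paper: the paper obtains the invariants $(D_N,2D_N-2,D_N-D_{N-1})$ and the type $\left(2D_N-D_{N-i+1}-D_{N-i}\right)_{i}$ from the Oeljeklaus--Toma decomposition and then simply says ``coming back to the definition~\eqref{deftype}, we obtain the following''. You carry out precisely that final unwinding, and your bookkeeping (the identity $i_\alpha=D_{N-\alpha}$, the fact that nonzero coefficients between $m_i$ and $m_{i+1}$ must sit at multiples of $D_{N-i}$, and the count of $d_{N-i+1}$ such multiples in each block) checks out.
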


     Now, let us state the surjectivity property that we will prove.
     \begin{thm}\label{surjHenonKato}Let $d_1,\dots,d_N\geq2$ and $D_i=\prod_{j=1}^id_j$, for $i\in\intent{0}{N}$. Let $\lambda\in\set{C}^*$ and
       \[P(x)=\sum\limits_{i=1}^Nx^{2D_N-D_{N-i+1}-D_{N-i}}\sum\limits_{l=0}^{d_{N-i+1}-1}\alpha_l^{(i)}x^{lD_{N-i}}\]
       be a polynomial with $\alpha_0^{(i)}\in\set{C}^*$, $i\in\intent{1}{N}$ and $\alpha_0^{(1)}=1$. Then, there exists~$c\in\set{C}$, with $c=0$ if $\lambda\neq1$ and a generalized H\'enon map~$F$ such that the germ of~$F$ at infinity is conjugated to
       \[\varphi(x,y)=\left(x^{D_N},\lambda yx^{2D_N-2}+P(x)+cx^{2D_N}\right).\]
     \end{thm}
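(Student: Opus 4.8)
The plan is to show that the assignment $F\mapsto F^{(3)}$ obtained by combining Lemmas~\ref{lemBottcher},~\ref{normalformstep2} and~\ref{normalformstep3} restricts to a \emph{bijection} between, on one side, the generalized H\'enon maps $F=F_N\circ\dots\circ F_1$ with standard factors $F_i(z,w)=\left(P_i(z)-a_iw,z\right)$ whose $P_i$ are monic and centered of the given degrees $d_i$ and whose $a_i$ lie in $\set{C}^*$, and on the other side, the germs of the form~\eqref{formtypeHenon} (with $c$ then determined by $F$, and equal to $0$ unless $\lambda=1$). The source of this map is parametrized by $\sum_{i=1}^N(d_i-1)+N=\sum_{i=1}^N d_i$ scalars, and by the count of monomials in~\eqref{formtypeHenon} the target carries exactly as many once $\alpha_0^{(1)}=1$ is imposed and $c$ is discarded. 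Granting the bijection, Theorem~\ref{surjHenonKato} is immediate: given $\lambda$ and $P$ as in the statement one takes the preimage $F$, lets $c$ be the coefficient returned by Lemma~\ref{normalformstep3} for $F$ (so $c=0$ when $\lambda\neq1$), and notes that $F^{(3)}=\varphi$ while $F^{(3)}$ is a normal form of the germ of $F$ at infinity, which is therefore conjugate to $\varphi$.

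Three things then have to be verified. First, that $F\mapsto F^{(3)}$ really lands among germs of the shape~\eqref{formtypeHenon}; this follows from a direct computation with the recursion of Lemma~\ref{lempsii} (the power series $\psi_N$, hence $\wt{h}$ modulo $x^{2D_N}$, only carries the expected monomials), and is consistent with Lemma~\ref{typeKatoHenon} via Lemma~\ref{selfinterprofil}. Second, injectivity, which is exactly Proposition~\ref{FmapstoF3inj}. Third, surjectivity, which is the main content and which I would obtain by running the computation behind Proposition~\ref{FmapstoF3inj} backwards.

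For the surjectivity, after the biregular change of parameters replacing $a_1$ by $A=\prod_{i=1}^N a_i$ (so that the source becomes the coefficients of $P_1,\dots,P_N$ together with $(A,a_2,\dots,a_N)\in(\set{C}^*)^N$), the aim is to order both the $\sum d_i$ source scalars and the $\sum d_i$ target scalars $\bigl(\lambda,(\alpha_l^{(i)})_{(i,l)\neq(1,0)}\bigr)$ compatibly, so that each target scalar is a nonzero universal constant times the matching source scalar plus a universal polynomial in the earlier source scalars. This is read off from $\lambda=A/D_N$, from the recursion $x^{D_i}/\psi_i(x)^{D_i}=U_i\bigl(x^{D_{i-1}}\bigr)-a_ix^{D_i}\bigl(\psi_{i-1}^{-1}(x)\bigr)^{-D_{i-2}}$ of Lemma~\ref{lempsii} and from the expansions of Lemma~\ref{lempsii2}: exactly as in the proof of Proposition~\ref{FmapstoF3inj}, the term $a_ix^{D_i}\bigl(\psi_{i-1}^{-1}(x)\bigr)^{-D_{i-2}}$ has no monomial of exponent a multiple of $D_{i-1}$ below $x^{D_i}$, so at each stage the splitting of the known series into a $U_i$-part and an $a_i$-part is forced and affine in the unknowns. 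A triangular system with nonvanishing constant diagonal can be solved from the target: one recovers unique values of all source scalars; moreover $A=D_N\lambda\neq0$, and each $a_j$ with $j\geq2$ is returned as a nonzero constant times the corresponding $\alpha_0^{(i)}$ divided by a monomial in the $a$'s already found, hence is nonzero precisely because $\alpha_0^{(i)}\neq0$ by hypothesis. The recovered data is thus a genuine generalized H\'enon map with monic centered factors, which is the preimage sought.

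The step I expect to cost the most work is the bookkeeping in this triangularity statement: producing the precise matching of the two ordered lists of $\sum d_i$ scalars and checking that the diagonal coefficients are honest nonzero constants. In particular one must track the surgery of Lemma~\ref{normalformstep3}, where the coefficient of $x^{D_N}$ in $F^{(3)}$ is $\frac{D_N}{A}b_{2D_N-1}$ (rather than $b_{D_N}$, which vanishes), so that some of the $\alpha_l^{(i)}$ are governed by high-order coefficients of $\wt{h}$. This is a quantitative sharpening of the argument already used to prove Proposition~\ref{FmapstoF3inj}, and it is precisely here that the description of Kato surfaces of a fixed self-intersection profile due to Oeljeklaus--Toma~\cite{OelToma}, recalled around Lemma~\ref{typeKatoHenon}, is used, as it tells in advance that the target monomials are those displayed in~\eqref{formtypeHenon}.
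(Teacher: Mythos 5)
Your proposal follows essentially the same route as the paper: after the preliminary conjugation $(x,y)\mapsto(x,y+\alpha_1^{(1)}x)$ that accounts for the Lemma~\ref{normalformstep3} surgery you single out, the paper reduces Theorem~\ref{surjHenonKato} to Proposition~\ref{propsurjKato} and proves it by exactly the triangular coefficient system with nonzero constant diagonal that you describe, built from the recursion of Lemma~\ref{lempsii}, the expansions of Lemma~\ref{lempsii2}, and the universal-polynomial Lemma~\ref{lempolyuniv}, with the $a_{i}\neq0$ condition recovered from $\alpha_0^{(i)}\neq0$ just as you indicate. The only cosmetic difference is that you package the argument as a bijection and invoke Proposition~\ref{FmapstoF3inj} for injectivity, which the paper does not need for this statement.
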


     Recall that in~\cite{OelToma}, the authors show that the numbers $(\lambda,(\alpha_l^{(i)})_{l,i})$, with $\lambda\neq1$, para\-metrize the logarithmic moduli space of Kato surfaces of self-intersection profile~\eqref{DlSHenongen2} and $((\alpha_l^{(i)})_{l,i},c)$ parametrize the space if $\lambda=1$. In other words, if~$\lambda\neq1$, the map taking a generalized Hénon map to its associated Kato surface is indeed surjective onto the logarithmic moduli space of Kato surfaces with self-intersection profile~\eqref{DlSHenongen2}. On the other hand, when~$\lambda=1$, we get a section of the projection map $((\alpha_l^{(i)})_{l,i},c)\mapsto(\alpha_l^{(i)})_{l,i}$.
     
     Let us now turn to the proof of Proposition~\ref{surjHenonKato} and first slightly change~$\varphi$ in~\eqref{formtypeHenon} and reorder its coefficients. By a change of variable~$(x,y)\mapsto(x,y+\alpha_1^{(1)}x)$, one can conjugate~$\varphi$ with a germ of the form
     \begin{equation}\label{formtypeHenonmod}\begin{aligned}\wt{\varphi}(x,y)=&\left(x^{D_N},\lambda yx^{2D_N-2}+\wt{P}(x)+cx^{2D_N}\right),\\
           \wt{P}(x)=&x^{D_N-D_{N-1}}\sum\limits_{i=1}^Nx^{D_N+D_{N-1}-D_i-D_{i-1}}\left(\wt{\alpha}_0^{(i)}+\sum\limits_{l=2}^{d_i}\wt{\alpha}_l^{(i)}x^{lD_{i-1}}\right),\end{aligned}\end{equation}
     with $\wt{\alpha}_0^{(i)}\in\set{C}^*$ and $\wt{\alpha}_0^{(N)}=1$. In fact, the relationship between the~$\alpha_l^{(i)}$ in~\eqref{formtypeHenon} and the~$\wt{\alpha}_l^{(i)}$ in~\eqref{formtypeHenonmod} is the following
     \[\wt{\alpha}_l^{(i)}=\alpha_l^{(N-i+1)},~l\neq1,d_i,\qquad \wt{\alpha}_{d_i}^{(i)}=\alpha_1^{(N-i+2)},~i\neq1,\qquad\wt{\alpha}_{d_1}^{(1)}=\lambda\alpha_1^{(1)}.\]
     So, now, by Lemmas~\ref{normalformstep2} and~\ref{normalformstep3} and by~\eqref{exprhpsi}, we only need to show the following.

     \begin{prop}\label{propsurjKato} Let~$\wt{P}$ be a polynomial as in~\eqref{formtypeHenonmod} and $\lambda\in\set{C}^*$. Then, there exists a generalized H\'enon map~$F$, with Jacobian~$A$ and degree~$D_N$ with $\lambda=\frac{A}{D_N}$, such that
       \[x^{D_N}\left(\psi_N^{-1}(x)\right)^{-D_{N-1}}=\wt{P}(x)\left(1+O\left(x^{D_N+D_{N-1}}\right)\right),\]
       with the notations of~\eqref{defphiipsi} related to~$F$.
     \end{prop}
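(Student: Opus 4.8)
The plan is to reduce Proposition~\ref{propsurjKato} to an explicit inversion of a triangular map. First a dimension count: a generalized H\'enon map $F=F_N\circ\dots\circ F_1$ built from monic centered polynomials $P_i$ of fixed degrees $d_i$ and subject to $\prod_{i=1}^Na_i=\lambda D_N$ has exactly $\sum_{i=1}^N(d_i-1)+(N-1)=\sum d_i-1$ free parameters, namely the non-leading coefficients $p^{(i)}_0,\dots,p^{(i)}_{d_i-2}$ of the $P_i$ together with $a_2,\dots,a_N$ (then $a_1$ is forced). A polynomial $\widetilde P$ of the form~\eqref{formtypeHenonmod} with $\widetilde\alpha^{(N)}_0=1$ also has exactly $\sum d_i-1$ free coefficients. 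Writing $R_i(x):=x^{D_i}\bigl(\psi_i^{-1}(x)\bigr)^{-D_{i-1}}$, so that by~\eqref{exprhpsi} the series we must match to $\widetilde P$ modulo $x^{2D_N}$ is $R_N$, I would show that, after the right ordering, the map sending $F$ to the coefficients of $R_N\bmod x^{2D_N}$ is lower triangular with nowhere-vanishing diagonal, and then build $F$ by reading off the inverse.

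The engine is Lemma~\ref{lempsii}, which reads $\psi_i(x)^{D_i}=x^{D_i}\bigl(U_i(x^{D_{i-1}})-a_i x^{D_i-D_{i-1}}R_{i-1}(x)\bigr)^{-1}$; inverting $\psi_i$ and raising to the power $-D_{i-1}$ expresses $R_i$ through $R_{i-1}$, the coefficients of $P_i$, and $a_i$. Running this together with Lemma~\ref{lempsii2}, one sees: (i) each parameter $\theta$ enters $S_N(x):=U_N(x^{D_{N-1}})-a_Nx^{D_N-D_{N-1}}R_{N-1}(x)$, hence $R_N$, linearly in a well-defined ``own'' degree $\delta_\theta$; these degrees are pairwise distinct, and together with the base degree $D_N-D_{N-1}$ they are exactly the degrees carried by~\eqref{formtypeHenonmod} — so conversely $R_N\bmod x^{2D_N}$ is supported on the chunk degrees of $\widetilde P$ (this support statement is also contained in the discussion before the statement, via Lemmas~\ref{selfinterprofil} and~\ref{typeKatoHenon} and the normal-form computation of Section~\ref{secnormalform}); (ii) the contributions of the coefficients of $P_N$ alone occur in degrees $\equiv 0\pmod{D_{N-1}}$, and $p^{(N)}_{d_N-l}$ occurs with linear coefficient $-D_{N-1}/D_N$ in the degree of $\widetilde\alpha^{(N)}_l$ ($2\le l\le d_N$); (iii) since $a_N$ enters $R_N$ only through $-a_Nx^{D_N-D_{N-1}}R_{N-1}(x)$, a degree-and-residue count pins down its contribution and yields, for $1\le i\le N-1$, the \emph{exact} identity $\widetilde\alpha^{(i)}_0=\tfrac{D_i}{D_N}\prod_{j=i+1}^Na_j$, while all of chunk $i$ of $R_N$ is produced from chunk $i$ of $R_{N-1}$ and the single extra factor $a_N$. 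Ordering the parameters by increasing own-degree — within the block attached to $F_i$ one has $p^{(i)}_{d_i-2},\dots,p^{(i)}_1$, then $a_i$ (absent for $i=1$), then $p^{(i)}_0$, the first sub-block empty when $d_i=2$, and the blocks run $i=N,N-1,\dots,1$ — and matching against $\widetilde\alpha^{(N)}_2,\dots,\widetilde\alpha^{(N)}_{d_N-1},\,\widetilde\alpha^{(N-1)}_0,\,\widetilde\alpha^{(N)}_{d_N},\dots$ correspondingly, each chunk coefficient of $R_N$ is a nonzero universal multiple of the matching parameter plus a universal polynomial in the preceding parameters, exactly as the introduction advertises.

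Granting this, the inversion is immediate. Given $\lambda$ and $\widetilde P$ as in the statement, read off $p^{(N)}_{d_N-l}$ from $\widetilde\alpha^{(N)}_l$; then $a_N=\tfrac{D_N}{D_{N-1}}\widetilde\alpha^{(N-1)}_0\in\set{C}^*$; then $P_{N-1}$; then $a_{N-1}=\tfrac{D_N\widetilde\alpha^{(N-2)}_0}{D_{N-2}a_N}\in\set{C}^*$; and so on down to $P_1$; finally set $a_1=\lambda D_N/(a_2\cdots a_N)\in\set{C}^*$, which does not disturb $R_N$ since $\psi_N$ depends only on $a_2,\dots,a_N$. The resulting $F$ is a generalized H\'enon map with monic centered polynomials, of degree $D_N$ and Jacobian $A=\prod a_i=\lambda D_N$, so $\lambda=A/D_N$; and by construction $R_N\equiv\widetilde P\pmod{x^{2D_N}}$, that is $x^{D_N}\bigl(\psi_N^{-1}(x)\bigr)^{-D_{N-1}}=\widetilde P(x)\bigl(1+O(x^{D_N+D_{N-1}})\bigr)$, which is the assertion.

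The real work will be in (ii)--(iii): certifying, for each parameter, the exact own-degree and the exact linear coefficient there, and ruling out every higher-order contribution (higher powers of one parameter, or products of several coefficients of the $U_j$'s, the $a_j$'s and tails of the $R_j$'s) at that degree — all such terms must land in strictly larger degrees or in a wrong residue class modulo the relevant $D_{j-1}$. I would organize this as an induction on $N$ via the recursion above, which reduces every statement about $R_N$ to the corresponding statement about $R_{N-1}$ plus the single factor $F_N$; the base case $N=1$, where $\psi_1(x)=x\,U_1(x)^{-1/d_1}$ and the assignment $(p^{(1)}_{d_1-2},\dots,p^{(1)}_0)\mapsto(\widetilde\alpha^{(1)}_2,\dots,\widetilde\alpha^{(1)}_{d_1})$ is visibly lower triangular with diagonal entries $-1/d_1$, is immediate. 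The block structure $R_i(x)=x^{D_i-D_{i-1}}\cdot(\text{series in }x^{D_{i-1}})$ modulo the $a_j$'s — already exploited in the proof of Proposition~\ref{FmapstoF3inj} — is precisely what makes the triangular pattern rigid enough to carry the induction, and it is also what forces $R_N$ to vanish in the degrees $\le 2D_N-1$ that are absent from $\widetilde P$, so that matching the finitely many chunk coefficients really is enough.
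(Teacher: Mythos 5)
Your proposal is correct and follows essentially the same route as the paper: use the recursion of Lemma~\ref{lempsii} together with the power-series manipulations of Lemma~\ref{lempolyuniv} to show, by induction on~$N$, that the coefficients of $x^{D_N}\left(\psi_N^{-1}(x)\right)^{-D_{N-1}}$ depend triangularly, with nowhere-vanishing diagonal, on the coefficients of the $P_i$ and the $a_i$ (with $a_1$ left free to adjust the Jacobian, since $\psi_N$ does not involve it), and then invert. Your bookkeeping of the interleaved degree ordering and of the exact leading coefficients of each chunk is, if anything, slightly more explicit than the paper's, which leaves the corresponding verification to the reader.
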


     We will use extensively the following elementary result.

     \begin{lem}\label{lempolyuniv} Let $\psi(x)=1+\sum_{l\in\set{N}^*}\beta_lx^l$ be a power series and $\alpha\in\set{R}^*$. Then, developping in power series $\psi(x)^{\alpha}=1+\sum_{l\in\set{N}^*}\gamma_lx^l$, we have
       \[\gamma_l=\alpha\beta_l+P_{l,\alpha}\left(\beta_1,\dots,\beta_{l-1}\right),\qquad l\in\set{N}^*,\]
       where $P_{l,\alpha}$ are universal polynomials. Here, we took the principal branch of the logarithm to determine $\psi(x)^{\alpha}$. Moreover, if $\gcd\left\{m\in\intent{1}{l-1}~;~\beta_m\neq0\right\}$ does not divide~$l$, then $P_{l,\alpha}(\beta_1,\dots,\beta_{l-1})=0$.

       Similarly, if $\psi(x)=x\left(1+\sum_{l\in\set{N}^*}\beta_lx^l\right)$, then $\psi^{-1}(x)=x\left(1+\sum_{l\in\set{N}^*}\gamma_lx^l\right)$, for
       \[\gamma_l=-\beta_l+R_l\left(\beta_1,\dots,\beta_{l-1}\right),\qquad l\in\set{N}^*,\]
       where $R_l$ are universal polynomials. Moreover, if $\gcd\left\{m\in\intent{1}{l-1}~;~\beta_m\neq0\right\}$ does not divide~$l$, then $R_l(\beta_1,\dots,\beta_{l-1})=0$.
     \end{lem}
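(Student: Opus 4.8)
The statement is purely a formal power-series computation, so the natural approach is induction on the index $l$, extracting coefficients and checking both the ``leading coefficient plus universal polynomial in earlier coefficients'' shape and the divisibility vanishing. I would treat the two halves (the power $\psi^\alpha$ and the inverse $\psi^{-1}$) separately but by the same mechanism.

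For the first half, write $\psi(x)=1+\beta(x)$ with $\beta(x)=\sum_{m\ge1}\beta_mx^m$, and use $\psi(x)^\alpha=\exp\!\big(\alpha\log(1+\beta(x))\big)$. Expanding $\log(1+\beta)=\sum_{k\ge1}\frac{(-1)^{k-1}}{k}\beta^k$ and then the exponential series, the coefficient $\gamma_l$ of $x^l$ is a sum over all ways of writing $l$ as an ordered sum of positive integers (with rational coefficients depending only on $\alpha$ and the combinatorics) of products of the corresponding $\beta_m$'s. The only term that involves $\beta_l$ itself is the length-one term, which contributes exactly $\alpha\beta_l$; every other term is a product $\beta_{m_1}\cdots\beta_{m_k}$ with $k\ge2$ and each $m_i<l$, hence lies in $\set{C}[\beta_1,\dots,\beta_{l-1}]$. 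This gives $\gamma_l=\alpha\beta_l+P_{l,\alpha}(\beta_1,\dots,\beta_{l-1})$ with $P_{l,\alpha}$ universal (it depends only on $l$ and $\alpha$, not on the particular series). For the divisibility statement: let $g=\gcd\{m\in\intent{1}{l-1}\mid\beta_m\neq0\}$ and suppose $g\nmid l$. Any monomial $\beta_{m_1}\cdots\beta_{m_k}$ appearing nontrivially in $P_{l,\alpha}$ has $m_1+\dots+m_k=l$ and each $\beta_{m_i}\neq0$, so $g\mid m_i$ for all $i$, whence $g\mid l$, a contradiction; thus every such monomial vanishes and $P_{l,\alpha}(\beta_1,\dots,\beta_{l-1})=0$.

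For the second half, I would proceed by induction on $l$ directly from the functional equation $\psi(\psi^{-1}(x))=x$. Writing $\psi^{-1}(x)=x(1+\sum_{l\ge1}\gamma_lx^l)$ and substituting into $\psi(u)=u(1+\sum_{m\ge1}\beta_m u^m)$, the coefficient of $x^{l+1}$ on the left-hand side equals $0$ and is, after isolating the top term, $\gamma_l+\beta_l+(\text{polynomial in }\gamma_1,\dots,\gamma_{l-1}\text{ and }\beta_1,\dots,\beta_{l-1})$; here the cross term $\beta_l$ comes from the length-one contribution $\beta_m u^m$ with $u=x(1+\dots)$ at its lowest order, and the term $\gamma_l$ from the factor $1+\sum\gamma_lx^l$ at order $x^l$. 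Solving gives $\gamma_l=-\beta_l+R_l(\beta_1,\dots,\beta_{l-1})$, and by the inductive hypothesis each $\gamma_i$ for $i<l$ is itself a universal polynomial in $\beta_1,\dots,\beta_{i-1}$, so $R_l$ can be taken to be a universal polynomial in $\beta_1,\dots,\beta_{l-1}$ only. The divisibility vanishing is then proved by the same degree-counting argument as above, now carried along the induction: if $g\nmid l$ then in particular $g\nmid i$ for the relevant intermediate indices, the inductive vanishing kills the corresponding $\gamma_i$, and every surviving monomial forces $g\mid l$.

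**Main obstacle.** None of the steps is deep; the only mild care needed is bookkeeping—making sure the claimed ``universality'' really holds, i.e.\ that $P_{l,\alpha}$ and $R_l$ depend only on $l$ (and $\alpha$) and not on the particular input series, and that in the second half the recursion genuinely expresses $\gamma_l$ through $\beta_1,\dots,\beta_{l-1}$ rather than through the $\gamma_i$'s (which is handled by folding in the inductive hypothesis). The divisibility claim, while it looks like the substantive part, reduces in both cases to the trivial observation that a common divisor of all parts of an ordered partition of $l$ divides $l$. So I expect the write-up to be short, with the induction for $\psi^{-1}$ being the only place where one must be slightly attentive about what is being induced on.
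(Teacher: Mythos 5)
Your argument is correct: the paper states this lemma without proof (it is introduced as an ``elementary result''), so there is no official argument to compare against, and your write-up supplies exactly the routine verification that is intended. Both halves are sound --- the extraction of the length-one term $\alpha\beta_l$ (resp.\ $-\beta_l$) from the composition/inversion expansion, the observation that every other monomial $\beta_{m_1}\cdots\beta_{m_k}$ has $k\ge2$ parts summing to $l$ (weighted homogeneity), and the resulting gcd argument for the vanishing claim are precisely what is needed.
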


     \begin{proof}[Proof of Proposition~\ref{propsurjKato}] Consider a generalized H\'enon map $F=F_N\circ\dots\circ F_1$, where
       \[F_i(z,w)=\left(P_i(z)-a_iw,z\right),\qquad i\in\intent{1}{N},\]
       with $a_i\in\set{C}^*$ and~$P_i$ are monic and centered polynomials of degree~$d_i$. Denote by
       \begin{equation}\label{defbetali}U_i(x)=x^{d_i}P_i\left(\frac{1}{x}\right)=1+\sum_{l=2}^{d_i}\beta_l^{(i)}x^i.\end{equation}
       Also, define $\beta_0^{(i)}=a_{i+1}$, for $i\in\intent{1}{N-1}$, $\beta_0^{(N)}=1$ and suppose that $a_1=\frac{\lambda D_N}{\prod_{i=2}^{N-1}a_i}$. We will show that the map $h(x)=x^{D_N}\left(\psi_N^{-1}(x)\right)^{-D_{N-1}}$ equals
         \begin{equation}\label{surjpoly}x^{D_N-D_{N-1}}\sum\limits_{i=1}^Nx^{D_N+D_{N-1}-D_i-D_{i-1}}\left(\gamma_0^{(i)}+\sum\limits_{l=2}^{d_i}\gamma_l^{(i)}x^{lD_{i-1}}\right)\left(1+O\left(x^{D_N+D_{N-1}}\right)\right),\end{equation}
         with for $i\in\intent{1}{N}$ and $l\in\intent{0}{d_i}$, $l\neq1$,
         \[\gamma_l^{(i)}=u_{l,i}\beta_l^{(i)}+S_{l,i}\left(\beta_0^{(N)},\beta_2^{(N)},\dots,\beta_{d_N}^{(N)},\dots,\beta_0^{(i+1)},\beta_2^{(i+1)},\dots,\beta_{d_{i+1}}^{(i+1)},\beta_0^{(i)},\beta_2^{(i)},\dots,\beta_{l-1}^{(i)}\right).\]
         Above, $S_{l,i}$ are polynomials depending only on the degrees $d_1,\dots,d_N$, $u_{l,i}\in\set{C}^*$, $u_{0,N}=1$ and $\gamma_0^{(i)}=u_{0,i}\beta_0^{(i)}$, $i\in\intent{1}{N}$ (\emph{i.e.} the $S_{l,i}$ vanishes in the above equation). If we show the latter, it is clear that we can invert such a map $(\beta_l^{(i)})_{l,i}\mapsto(\gamma_l^{(i)})_{l,i}$ by induction, and then find~$\beta_l^{(i)}$ such that $\gamma_l^{(i)}=\wt{\alpha}_l^{(i)}$. The conditions for $l=0$ ensure that we indeed obtain $a_i\neq0$, the other coefficients of the H\'enon map in~\eqref{defbetali} being free to be any complex number.

         Now, to show~\eqref{surjpoly}, we proceed by induction on~$N$. Since $\psi_1(x)=xU_1(x)^{-1/d_1}$, Lemma~\ref{lempolyuniv} gives the result for~$N=1$. For the induction, we use Lemma~\ref{lempsii} and repeatedly Lemma~\ref{lempolyuniv}. The details are left to the reader.
     \end{proof}

\bibliography{Henon-Kato}

\bibliographystyle{plain}

\end{document}